\documentclass[11pt]{article}

\usepackage{amsmath, amssymb, amsthm} 

\usepackage{dsfont}

\usepackage{tikz}

\usepackage[T1]{fontenc}
\usepackage{lmodern}
\usepackage{xcolor}
\usepackage{amsmath,amssymb,amsthm}
\usepackage[margin=1in]{geometry}
\usepackage{microtype}
\usepackage[hidelinks]{hyperref}
\hypersetup{pdftitle={An almost-linear bound for acyclic forbidden matrices},
  pdfauthor={Xiangyu Li}}

\allowdisplaybreaks
\expandafter\let\expandafter\oldproof\csname\string\proof\endcsname
\let\oldendproof\endproof
\renewenvironment{proof}[1][\proofname]{%
	\oldproof[\bf #1]%
}{\oldendproof}

\allowdisplaybreaks

\theoremstyle{plain}
\newtheorem{theorem}{Theorem}
\newtheorem{lemma}{Lemma}[section]
\newtheorem{claim}[lemma]{Claim}

\newtheorem{corollary}[theorem]{Corollary}

\newtheorem{definition}[lemma]{Definition}

\newtheorem{fact}[lemma]{Fact}

\RequirePackage[normalem]{ulem} 
\RequirePackage{color}\definecolor{RED}{rgb}{1,0,0}\definecolor{BLUE}{rgb}{0,0,1} 

\newcommand{\Ex}{\text{Ex}}

\newcommand{\kk}{\mathbf{k}}
\newcommand{\vv}{\mathbf{v}}
\newcommand{\uu}{\mathbf{u}}

   \title{Proof of the Pach-Tardos conjecture}
\author{Lior Gishboliner\thanks{Department of Mathematics, University of Toronto, Canada.
\emph{Email}: \href{mailto:lior.gishboliner@utoronto.ca}{\tt lior.gishboliner@utoronto.ca}. Research supported by an NSERC Discovery Grant.
} \and 
Xiangyu Li\thanks{Department of Mathematics, University of Toronto, Canada.
\emph{Email}: \href{mailto:xiangyuu.li@mail.utoronto.ca}{\tt
xiangyuu.li@mail.utoronto.ca}. Research supported by a University of Toronto Excellence Award (UTEA).}
}
\date{}

\begin{document}

\maketitle

\begin{abstract}
    We prove that for every acyclic matrix pattern $P$ it holds that $\Ex(n,P) \leq n^{1 + O_P(\frac{1}{\log\log n})}$.
\end{abstract}
    
	\section{Introduction}
    Let $P$ be a fixed $s \times t$ matrix with 0/1 entries. A copy of $P$ in a 0/1-matrix $M$ is an $s \times t$ submatrix $M'$ of $M$ such that $P$ can be obtained from $M'$ by turning some 1's into 0's. For an integer $n \geq 1$, let $\Ex(n,P)$ denote the maximum number of 1-entries in an $n \times n$ matrix $M$ with no copy of $P$.

    This extremal problem for matrices is a well-studied topic in extremal combinatorics, dating back to the work of F\"uredi and Hajnal \cite{FurediHajnal}. 
    For example, the celebrated result of Marcus and Tardos \cite{MarcusTardos} on pattern-avoiding permutations (which in particular settled
    the Stanley-Wilf conjecture) fits in this setting. We also refer to
    \cite{JJMM,KTTW,PachTardos,PettieTardos_refutation} for a non-exhaustive list of results on this topic, and to the surveys of Tardos \cite{Tardos_survey,Tardos_survey_2} on the closely-related Tur\'an problem for ordered graphs. 

    Binary matrices are in a one-to-one correspondence with ordered bipartite graphs, which are bipartite graphs with a linear order on each of the two parts. Indeed, the two parts simply correspond to the rows and columns of the matrix. Thus, each $s \times t$ matrix $P$ corresponds to an $s \times t$ ordered bipartite graph. The matrix $P$ is called {\em acyclic} if this bipartite graph is a forest. Pach and Tardos \cite{PachTardos} conjectured that $\Ex(n,P) \leq n \cdot \mathrm{polylog}(n)$ for every acyclic matrix $P$, but this was recently refuted by Pettie and Tardos \cite{PettieTardos_refutation}, who showed that certain acyclic matrices $P$ have $\Ex(n,P) \geq n 2^{\Omega(\sqrt{\log n})}$. A weaker form of the Pach-Tardos conjecture 
    states that $\Ex(n,P) \leq n^{1+o(1)}$ for every acyclic matrix $P$. This is arguably one of the main open problems in the area. Here we present a proof of this conjecture. It will be convenient to state the proof in the language of ordered bipartite graphs. Thus, the forbidden structure is an ordered bipartite forest, and we can assume (by adding edges) that this forest is a tree. 
        
	\begin{theorem}\label{thm:main}
		For all integers $s,t \geq 1$ and $\varepsilon > 0$, there exists 
		$$
		n_0 \leq 2^{2^{O_{s,t}(1/\varepsilon)}}
		$$ such that the following holds. Every $n \times n$ ordered bipartite graph $G$ with $n \geq n_0$ and $e(G) \geq n^{1+\varepsilon}$ contains a copy of every $s \times t$ ordered bipartite tree.  
	\end{theorem}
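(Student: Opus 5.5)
We argue by induction on the number of vertices of the tree $T$, in a strengthened form that keeps track of density at every scale. A tree with one edge is trivial. For a general $s\times t$ ordered bipartite tree $T$, pick a leaf $v$ of $T$, with unique neighbour $u$, and set $T' = T - v$. The new leaf must be embedded so that its position relative to the other vertices on its side is correct. The plan is to find a copy of $T'$ in which the image of $u$ has many neighbours in the appropriate gap between consecutive images of the vertices on $v$'s side. That is, for each embedded copy of $T'$ we need ``room'' in a prescribed interval.

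A plain induction hypothesis cannot guarantee room in a prescribed interval. The hypothesis must therefore be strengthened to a statement about many copies of $T'$ that are robust: for every ordered tree $T'$ of size $k$, a graph with $e(G)\ge n^{1+\varepsilon_k}$ contains a copy of $T'$ in which every vertex has, in every gap between the images of its ordered neighbours, degree at least $n^{\delta_k}$. The exponents satisfy $\varepsilon_k \to \varepsilon_{k+1}$ with only a constant-factor loss. A loss of this type, iterated $k\le s+t$ times, gives the double-exponential $n_0 \le 2^{2^{O_{s,t}(1/\varepsilon)}}$ only if each step loses a factor depending on $1/\varepsilon$ in $\log\log n$. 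I would therefore organise the loss as follows: each step shrinks $n$ to roughly $n^{\varepsilon}$, equivalently passes to a sub-block of the matrix.

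The first step is a regularisation via dyadic pigeonholing. Partition rows and columns into intervals at scale $n^{\varepsilon/2}$. The edges then split into two kinds: those that are ``local'', lying inside diagonal blocks of a coarse partition, and those that are ``spread'', between distinct blocks. If a positive fraction of the edges, at the density exponent level, is local, we recurse into a smaller block of size $m$. This block still has density $m^{1+\varepsilon'}$ with $\varepsilon'$ comparable to $\varepsilon$. Recursion depth $O(1/\varepsilon)$ with scales shrinking like $n\mapsto n^{c}$ is exactly what produces the $\log\log n$ dependence. Otherwise the graph is spread at the coarse scale. We then contract intervals into super-vertices, which yields a dense ordered bipartite graph on the blocks. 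Apply induction there to embed $T'$ at the block level, and use the within-block freedom to place the new leaf in the correct order position. The contraction can be made degree-regular (each block-edge carries many original edges by a standard cleaning) so that any block-level copy lifts.

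The main obstacle is the lifting step. A block-level copy of $T'$ fixes only which blocks the vertices lie in. Inside a block the order between two vertices of $T$ that land in the same block is not determined, and acyclicity has to be used to resolve it. I would try first to ensure, by the cleaning, that distinct vertices of $T$ on the same side always land in distinct blocks; vertices forced into the same block would be handled by the local recursion. The tree structure then lets us lift greedily from a root: at each vertex, choose an image inside its block that has many neighbours in each required child block, using the minimum-degree-in-every-gap robustness. Making this robustness survive both the contraction and the lifting, with only a polynomial loss in $n$ per level, is where I expect the real work to lie.
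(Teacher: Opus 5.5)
There is a genuine gap, and it sits where you say the real work lies: leaf-by-leaf induction with a robustness hypothesis does not close.

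First, your hypothesis as stated bounds the degree of $\varphi(u)$ only in the gaps between images of $u$'s own neighbours in $T'$. But $v$ must be placed between the images of the two consecutive vertices of $T'$ on $v$'s side, and these are in general not neighbours of $u$. So all of $\varphi(u)$'s many neighbours in the relevant gap may miss the required sub-interval, and the hypothesis does not supply even the step in your first paragraph.

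Second, if you strengthen the hypothesis to gaps between all consecutive opposite-side images, it stops being inductive. $\varphi(v)$ is chosen merely as some neighbour of $\varphi(u)$ in an interval, so nothing guarantees that $\varphi(v)$ has many neighbours in every gap on the other side. Moreover, inserting $\varphi(v)$ splits an existing gap, so every vertex on $u$'s side must now have many neighbours on \emph{both} sides of $\varphi(v)$. The hypothesis gives you no way to pick one vertex meeting all these constraints at once.

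The block-level route has the same defect. Knowing that each block-edge $AB$ carries many original edges, or even that every vertex of $A$ has zero or many neighbours in $B$, does not let you lift greedily. The image of a vertex is forced into $N(\varphi(\text{parent}))\cap A$ and must simultaneously send edges to every child block $B,B',\dots$. The sets $N(\varphi(\text{parent}))\cap A$, $\{x\in A: N_B(x)\neq\emptyset\}$ and $\{x\in A: N_{B'}(x)\neq\emptyset\}$ may be pairwise disjoint. This cross-block compatibility is the actual difficulty, and your sketch offers no mechanism that provides it.

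The paper gets around this by fixing the positions of all $s+t$ tree vertices at once and phrasing compatibility distributionally. It finds $X_1<\dots<X_s\subseteq U$, $Y_1<\dots<Y_t\subseteq V$, and distributions $\mu_j$ on stars $\mathcal{S}(X_1,\dots,X_s,Y_j)$ with $d_{\mathrm{TV}}(\mu_j^{(i)},\mu_{j'}^{(i)})\le 1/t$. Every transversal embedding is then automatically order-correct. A bottom-up induction with a union bound (Lemma~\ref{lem:embedding}) shows that the stars at $b$ which extend to $T_b$ have $\mu_b$-mass at least $1-d(b)/t$; this is exactly the compatibility your greedy lift lacks.

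That structure comes from an entropy argument (Lemma~\ref{lem:main}):
\begin{itemize}
\item Sample $u\in U$, a block index $k$, and $v\in N_{V_k}(u)$; the min-degree condition gives $H[u\mid k,v]\ge\varepsilon\log p$.
\item After fixing a prefix, some digit of the $d$-ary expansion of $u$ carries entropy at least $\frac{\varepsilon}{2}\log d$. Taking $s$ conditionally independent copies $u_1,\dots,u_s$ given $(k,v)$ then yields the sets $X_i$.
\item Some digit of the $t$-ary expansion of $k$ is nearly uniform given each $u_i$. Combined with Pinsker, this yields the $Y_j$ and the TV bounds.
\end{itemize}

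Your ``local versus spread'' pigeonholing corresponds only to the routine preprocessing step (Lemma~\ref{lem:preprocessing}: a minimal dense $m\times m$ subgraph plus heavy pairs), which supplies $d_{V_j}(u)\ge p^{-1+\varepsilon}|V_j|$. The double-exponential $n_0$ comes from needing $q=2^{O_{s,t}(1/\varepsilon)}$ column blocks there, not from a recursion depth.
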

	
	\noindent
	This theorem can be restated as follows: 
	\begin{corollary}
		For every acyclic pattern $P$, it holds that 
		$$
		\Ex(n,P) \leq n^{1 + O_P(\frac{1}{\log\log n})}.
		$$
	\end{corollary}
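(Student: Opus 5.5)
The corollary should follow from Theorem~\ref{thm:main} by solving the bound on $n_0$ for $\varepsilon$ in terms of $n$.

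First I reduce to trees. Let $P$ be an acyclic $s\times t$ pattern, and view it as an ordered bipartite forest on $s$ row-vertices and $t$ column-vertices. I add edges between vertices of different components until it becomes a spanning ordered bipartite tree $T$ on the same vertex sets; this is always possible, since any forest can be extended to a spanning tree using edges across the bipartition. Since $P \subseteq T$, any matrix containing a copy of $T$ also contains a copy of $P$. Hence $\Ex(n,P)\le \Ex(n,T)$, and it suffices to treat $T$, which is an $s\times t$ ordered bipartite tree.

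Next I choose $\varepsilon$. Theorem~\ref{thm:main} gives a constant $C=C_{s,t}$ such that $n_0(\varepsilon)\le 2^{2^{C/\varepsilon}}$. Given $n$, set $\varepsilon = 2C/\log_2\log_2 n$. Then
$$2^{2^{C/\varepsilon}} = 2^{(\log_2 n)^{1/2}} \le n,$$
so $n\ge n_0(\varepsilon)$ holds for all sufficiently large $n$. The exponent $2$ in the choice of $\varepsilon$ leaves slack to absorb rounding and the requirement $\varepsilon>0$.

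Now suppose $G$ is an $n\times n$ ordered bipartite graph with no copy of $T$. Theorem~\ref{thm:main} then forces
$$e(G)<n^{1+\varepsilon}=n^{1+2C/\log_2\log_2 n}.$$
This is exactly a bound of the form $n^{1+O_P(1/\log\log n)}$. For the finitely many small $n$ where the choice of $\varepsilon$ does not yet give $n\ge n_0$, the trivial bound $\Ex(n,P)\le n^2$ applies. Since there are only finitely many such $n$, their contribution is absorbed by enlarging the implicit constant.

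Nothing here is difficult, because all the work is in Theorem~\ref{thm:main}. The only points that need care are the doubly-exponential bookkeeping in the choice of $\varepsilon$ and the monotonicity of $\Ex$ under taking sub-patterns, which justifies passing from $P$ to $T$.
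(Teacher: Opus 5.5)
Your proposal is correct and follows the same route the paper intends, since the paper presents the corollary as a direct restatement of Theorem~\ref{thm:main}. First reduce the forest to a tree by adding edges, as noted in the introduction. Then take $\varepsilon = \Theta_{s,t}(1/\log\log n)$ so that $n_0(\varepsilon) \le 2^{2^{O_{s,t}(1/\varepsilon)}} \le n$, and absorb the finitely many small $n$ into the constant, exactly as you do.
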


	\paragraph{Declaration of use of AI:} The proof was found by ChatGPT-6 Astra, with substantial input and guidance from the authors. The authors then rewrote the proof and take full responsibility for its correctness.

	\section{Preliminaries}
	
	We recall the definition of the total variation distance of two distributions.
	\begin{definition}[total variation distance]
		Let $\lambda,\mu$ be two distributions on the same (finite) set $\Omega$. The {\em total variation distance} between $\lambda$ and $\mu$ is defined as
		$$
		d_{\mathrm{TV}}\left( \lambda,\mu \right) := 
		\max_{S \subseteq \Omega} |\lambda(S) - \mu(S)|.
		$$
	\end{definition}
	
	
	We also need some basic facts from information theory. Throughout the paper, all logarithms are assumed to be base 2, unless stated otherwise. 
	Let $Z$ be a random variable taking values $z_1,\dots,z_n$ with probabilities $p_1,\dots,p_n$, respectively.  
	The entropy of $Z$ is defined as 
	$$
	H[Z] := \sum_{i=1}^n p_i \log \left( \frac{1}{p_i} \right),
	$$
    with the convention that 
    $0 \cdot \log(\frac{1}{0}) = 0$.
	
	\begin{fact}\label{fact:entropy basic}
		\hfill
		\begin{enumerate}
			\item $H[Z] \geq 0$.
			\item If $Z$ takes $n$ different values then $H(Z) \leq \log n$.
		\end{enumerate}
	\end{fact}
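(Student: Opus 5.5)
The statement to prove is Fact~\ref{fact:entropy basic}: (1) $H[Z]\ge 0$, and (2) $H[Z]\le \log n$ when $Z$ takes $n$ values. Both parts follow directly from the definition $H[Z]=\sum_i p_i\log(1/p_i)$ together with concavity of the logarithm.

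For part 1, we argue termwise. Each $p_i$ lies in $[0,1]$. If $p_i>0$, then $1/p_i\ge 1$, so $\log(1/p_i)\ge 0$ and therefore $p_i\log(1/p_i)\ge 0$. If $p_i=0$, the term is $0$ by the stated convention. Summing these nonnegative terms gives $H[Z]\ge 0$.

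For part 2, we use Jensen's inequality for the concave function $\log$ on $(0,\infty)$.
\begin{itemize}
\item Restrict to the index set $I=\{i: p_i>0\}$. Terms with $p_i=0$ contribute nothing, and $|I|\le n$.
\item The weights $(p_i)_{i\in I}$ sum to $1$ and are positive. Applying Jensen to the points $1/p_i$ gives
$$
H[Z]=\sum_{i\in I} p_i\log\frac{1}{p_i}\le \log\Big(\sum_{i\in I} p_i\cdot\frac{1}{p_i}\Big)=\log |I|\le \log n .
$$
\end{itemize}

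An alternative route, which avoids naming Jensen, is Gibbs' inequality. Compare $(p_i)$ with the uniform distribution $q_i=1/n$ and use $\ln x\le x-1$. This gives
$$
\sum_i p_i\ln\frac{q_i}{p_i}\le \sum_{i\in I}(q_i-p_i)\le 0,
$$
which rearranges to $H[Z]\le\log n$ after converting to base $2$.

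The only step needing care is the handling of zero probabilities. Discarding them before applying Jensen (or $\ln x\le x-1$) is what keeps $\log(1/p_i)$ well defined. Otherwise there is no real obstacle.
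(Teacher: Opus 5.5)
Your proof is correct. Part 1 is termwise nonnegativity under the convention $0\cdot\log(1/0)=0$. Part 2 is Jensen's inequality for the concave $\log$ on the support $I$, which gives $H[Z]\le\log|I|\le\log n$. Restricting to the support also covers the way the fact is actually used later in the paper, namely for random variables taking values in a set of size $n$ where some values may have probability zero.

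The paper states this fact as standard and gives no proof, so there is no argument to compare against. Yours is the usual textbook proof.

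Your alternative Gibbs route is worth keeping in mind because it matches the paper's own toolkit. It is exactly the case of constant $W$ in the computation from the proof of Lemma~\ref{lem:Pinsker}, where $D(\lambda\|\mu)=\log t-H[Z\mid W]$. Nonnegativity of that divergence comes from the inequality $\ln x\le x-1$, which the paper already uses in Fact~\ref{fact:h(p) bound}.

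One cosmetic point: in your Gibbs display, the left-hand sum should also run over $I$ only, or you should explicitly invoke the zero convention. As written, the left side includes undefined terms $0\cdot\ln(q_i/0)$.
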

	
	\vspace{0.3cm}
	\noindent
	If $Z = \mathrm{Ber}(p)$, then 
	$$
	H[Z] = h(p) := p\log\left( \frac{1}{p} \right) + 
    (1-p)\log\left( \frac{1}{1-p} \right).
	$$
	We will need the following simple bound on $h(p)$.
	\begin{fact}\label{fact:h(p) bound}
		$h(p) \leq p\log ( \frac{e}{p} )$.
	\end{fact}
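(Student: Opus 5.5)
The plan is to compare $h(p)$ with $p\log(e/p)$ term by term. The first term of $h(p)$ is exactly $p\log(1/p)$, and the target bound is $p\log(1/p) + p\log e$. So it suffices to show
$$
(1-p)\log\left(\frac{1}{1-p}\right) \leq p\log e \qquad \text{for all } p\in[0,1].
$$
The endpoints follow from the convention $0\cdot\log(1/0)=0$. At $p=0$ both sides of the claimed inequality vanish. At $p=1$ we get $h(1)=0\le \log e$.

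For $p\in(0,1)$, I would pass to natural logarithms. Since $\log x = \ln x/\ln 2$ and $\log e = 1/\ln 2$, dividing both sides by $1/\ln 2$ reduces the inequality to
$$
-(1-p)\ln(1-p) \leq p.
$$
To prove this, apply the elementary bound $\ln x \geq 1 - \frac{1}{x}$ for $x>0$ with $x = 1-p$. This gives $\ln(1-p) \geq 1 - \frac{1}{1-p} = -\frac{p}{1-p}$. Multiplying by $-(1-p) < 0$ reverses the inequality and yields $-(1-p)\ln(1-p) \leq p$.

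The bound $\ln x \ge 1 - 1/x$ is itself equivalent to $\ln(1/x)\le 1/x - 1$, which is the standard inequality $\ln y \le y-1$ applied with $y = 1/x$. That standard inequality follows from concavity of $\ln$: the tangent line at $y=1$ lies above the graph. Adding $p\log(1/p)$ to both sides of the reduced inequality then gives $h(p)\le p\log(1/p)+p\log e = p\log(e/p)$.

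There is no real obstacle here. The only points needing care are the conversion between base-2 and natural logarithms, and the sign flip when multiplying by $-(1-p)$.
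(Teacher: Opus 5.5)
Your proof is correct and follows essentially the same route as the paper. Both reduce to showing $(1-p)\log\bigl(\frac{1}{1-p}\bigr) \le p\log e$, treat $p\in\{0,1\}$ separately, and for $0<p<1$ apply $\ln y \le y-1$ at $y=\frac{1}{1-p}$; you phrase this equivalently as $\ln x \ge 1-\frac{1}{x}$ at $x=1-p$.
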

	\begin{proof}
        If $p=0,1$ then this is immediate. Now suppose that $0 < p < 1$.
		It suffices to show that 
        \begin{equation}\label{eq:h(p) bound}
        (1-p)\log \left( \frac{1}{1-p} \right) \leq p \log(e).
        \end{equation}
        We use the inequality $\ln(x) \leq x-1$, which holds for all $x > 0$. Taking 
        $x = \frac{1}{1-p}$, we get
        $
        \ln\big( \frac{1}{1-p} \big) \leq 
        \frac{1}{1-p} - 1 = \frac{p}{1-p},
        $
        and changing to base 2 gives
        $
        \log\big( \frac{1}{1-p} \big) \leq \frac{p\log(e) }{1-p}.
        $
        This proves \eqref{eq:h(p) bound}.
	\end{proof}

	\noindent
    For a random variable $Z$ and an event $E$, we write $H[Z \; | \; E]$ to mean the entropy of the random variable $Z$ conditioned on $E$ (i.e., we condition the probability measure on $E$).
	In particular, for two random variables $Z,W$ on the same probability space, we use $H[Z \; | \; W = w]$ to denote the entropy of the random variable $Z$ conditioned on the event $W = w$. The {\em conditional entropy} $H[Z \; | \; W]$ is defined as
	$$
	H[Z \; | \; W] := \sum_{w} \mathbb{P}[W = w] \cdot H[Z \; | \; W = w],
	$$
	where the sum is over all values $w$ taken by $W$. 

    We note that $H[Z \; | \; E]$ this is not the same as 
    $H[Z \; | \; \mathds{1}_E] = 
    \mathbb{P}[E] \cdot H[Z \; | \; E] + 
    \mathbb{P}[\overline{E}] \cdot 
    H[Z \; | \; \overline{E}]$. More generally, for random variables $Z,W$ and an event $E$, we may write 
    $H[Z \; | \; W,E]$ to mean the conditional entropy $H[Z \; | \; W]$ calculated with respect to the probability measure conditioned on $E$. 
    Again, this is not the same as 
    $H[Z \; | \; W,\mathds{1}_E]$.

    \begin{fact}
        $H[Z \; | \; W] \leq H[Z]$.
    \end{fact}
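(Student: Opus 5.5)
The plan is to show the equivalent form $H[Z] - H[Z \mid W] \geq 0$ directly from the definitions, using concavity of the function $\phi(x) = x\log\frac{1}{x}$ on $[0,1]$ (with $\phi(0)=0$). Write $p_i = \mathbb{P}[Z = z_i]$ and, for each value $w$ taken by $W$ with $\mathbb{P}[W=w]>0$, write $q_w = \mathbb{P}[W=w]$ and $p_{i|w} = \mathbb{P}[Z = z_i \mid W = w]$. By the law of total probability,
$$
p_i = \sum_w q_w \, p_{i|w}
$$
for every $i$, and the weights $q_w$ are nonnegative and sum to $1$.

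The key step is Jensen's inequality applied coordinatewise. Since $\phi''(x) = -\frac{1}{x\ln 2} < 0$ on $(0,1]$ and $\phi$ is continuous at $0$, the function $\phi$ is concave on $[0,1]$. Therefore, for each $i$,
$$
\phi(p_i) = \phi\Big(\sum_w q_w p_{i|w}\Big) \geq \sum_w q_w \, \phi(p_{i|w}).
$$
Summing over $i$ and exchanging the two finite sums gives
$$
H[Z] = \sum_i \phi(p_i) \geq \sum_w q_w \sum_i \phi(p_{i|w}) = \sum_w \mathbb{P}[W=w]\, H[Z \mid W = w] = H[Z \mid W],
$$
which is the claim.

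There is no real obstacle here. The only points needing care are the boundary cases: entries with $p_{i|w} = 0$ or $p_i = 0$ are handled by the convention $0 \cdot \log(1/0) = 0$, which is exactly continuity of $\phi$ at $0$, so concavity on the closed interval still holds. Values $w$ with $q_w = 0$ contribute nothing and can be dropped. An alternative route would be to write $H[Z] - H[Z \mid W]$ as the Kullback--Leibler divergence between the joint law of $(Z,W)$ and the product of the marginals, and to invoke Gibbs' inequality. The Jensen argument above is more self-contained given what the paper has set up, so I would use it.
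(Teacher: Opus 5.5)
Your proof is correct and complete. Concavity of $\phi(x)=x\log\frac{1}{x}$ on $[0,1]$, with $\phi(0)=0$ matching the paper's convention $0\cdot\log\frac{1}{0}=0$, lets you apply Jensen to $p_i=\sum_w \mathbb{P}[W=w]\,\mathbb{P}[Z=z_i \mid W=w]$ for each $i$. Summing over $i$ then gives exactly $H[Z]\ge H[Z\mid W]$, and you handle the zero-probability cases correctly.

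The paper gives no proof of this fact; it simply lists it among standard information-theoretic facts, so there is no argument to compare against. Of the two routes you mention, the Jensen one is indeed more self-contained. The KL route would write $H[Z]-H[Z\mid W]=D\bigl(P_{(Z,W)}\,\|\,P_Z\times P_W\bigr)$, which parallels the divergence computation in the paper's proof of Lemma~\ref{lem:Pinsker}. It would still need nonnegativity of $D$ (Gibbs' inequality), and that is itself proved by the same kind of concavity argument.
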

    

	\begin{fact}[chain rule]\label{fact:chain rule}
		Let $Z_1,\dots,Z_m,W$ be random variables on the same probability space. Then
		$$
		H[(Z_1,\dots,Z_m) \; | \; W] = \sum_{i=1}^m H[Z_i \; | \; Z_1,\dots,Z_{i-1},W].
		$$
	\end{fact}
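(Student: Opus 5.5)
The chain rule of Fact~\ref{fact:chain rule} will follow by induction on $m$. The whole argument rests on one two-variable identity, applied to an arbitrary finite probability measure:
$$
H[(Z_1,Z_2)] = H[Z_1] + H[Z_2 \; | \; Z_1].
$$
Because the identity holds for every measure, we may apply it to the measure conditioned on $\{W=w\}$. Averaging over $w$ with weights $\mathbb{P}[W=w]$ then gives the conditioned version
$$
H[(Z_1,Z_2)\;|\;W] = H[Z_1\;|\;W] + H[Z_2\;|\;Z_1,W].
$$
This step needs one small check. Conditioning first on $W=w$ and then on $Z_1=z$ must give the same measure as conditioning on $(Z_1,W)=(z,w)$. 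It does, since $\mathbb{P}[\cdot \mid W=w, Z_1=z]$ is exactly the measure conditioned on the joint event. The weights also match, because $\mathbb{P}[W=w]\,\mathbb{P}[Z_1=z\mid W=w] = \mathbb{P}[Z_1=z,W=w]$. Hence $\sum_w \mathbb{P}[W=w]\, H[Z_2 \mid Z_1, W=w]$ unfolds precisely to $H[Z_2\mid Z_1,W]$.

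For the unconditioned two-variable identity, write $p(z_1,z_2)$ for the joint probabilities and use the factorization $p(z_1,z_2)=p(z_1)\,p(z_2\mid z_1)$. Taking logarithms gives
$$
\log\frac{1}{p(z_1,z_2)}=\log\frac{1}{p(z_1)}+\log\frac{1}{p(z_2\mid z_1)}.
$$
We then take the expectation over the joint distribution. The first term gives $H[Z_1]$. The second term, grouped by the value of $z_1$, is $\sum_{z_1} p(z_1)\, H[Z_2\mid Z_1=z_1] = H[Z_2\mid Z_1]$. Throughout, sums range only over outcomes of positive probability, which is consistent with the convention $0\log(1/0)=0$.

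For the induction, the case $m=1$ is trivial. For the inductive step, we apply the conditioned two-variable identity with first variable $(Z_1,\dots,Z_{m-1})$ and second variable $Z_m$:
$$
H[(Z_1,\dots,Z_m)\mid W]=H[(Z_1,\dots,Z_{m-1})\mid W]+H[Z_m\mid Z_1,\dots,Z_{m-1},W].
$$
The inductive hypothesis expands the first term into the sum over $i\le m-1$, which completes the induction.

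The only real subtlety is the bookkeeping in the conditioning step above. Everything else is direct manipulation of finite sums.
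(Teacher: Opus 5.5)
Your proof is correct. The two-variable identity from $p(z_1,z_2)=p(z_1)\,p(z_2\mid z_1)$, its conditioned version obtained by working under $\mathbb{P}[\cdot\mid W=w]$ and averaging (with the measure and weight bookkeeping you verify), and the induction that treats $(Z_1,\dots,Z_{m-1})$ as a single variable are all sound. The paper gives no proof of this statement and simply quotes it as a standard fact; your argument is the usual textbook derivation, so there is no alternative route to compare it against.
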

    \noindent
	The following is a simple corollary of the chain rule.
	\begin{fact}\label{fact:chain rule corollary}
		For random variables $X,Y,Z$,
		$$
		H[X \; | \; Y,Z] = H[X \; | \; Y] + H[Z \; | \; X,Y] - H[Z \; | \; Y].
		$$
	\end{fact}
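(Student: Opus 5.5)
We derive the identity by applying the chain rule (Fact~\ref{fact:chain rule}) to the pair $(X,Z)$ conditioned on $Y$, and expanding it in the two possible orders. Taking $m=2$ with $W=Y$, first with $Z_1=X$, $Z_2=Z$, we get
$$
H[(X,Z) \; | \; Y] = H[X \; | \; Y] + H[Z \; | \; X,Y].
$$
Next take $Z_1 = Z$, $Z_2 = X$, again with $W=Y$. The chain rule gives
$$
H[(Z,X) \; | \; Y] = H[Z \; | \; Y] + H[X \; | \; Z,Y].
$$

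The random variables $(X,Z)$ and $(Z,X)$ determine each other by a bijection of their value sets. Conditioned on any event $Y=y$, their distributions therefore agree up to relabeling, so their entropies coincide. Averaging over $y$ gives $H[(X,Z)\;|\;Y] = H[(Z,X)\;|\;Y]$. Likewise $H[X \;|\; Z,Y] = H[X\;|\;Y,Z]$, since conditioning on the pair $(Z,Y)$ is the same as conditioning on $(Y,Z)$: the events $\{Z=z,Y=y\}$ and $\{Y=y,Z=z\}$ are identical.

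Equating the right-hand sides of the two displays gives
$$
H[X\;|\;Y] + H[Z\;|\;X,Y] = H[Z\;|\;Y] + H[X\;|\;Y,Z],
$$
and rearranging yields the claimed formula.

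All quantities here are finite, since the variables take finitely many values and Fact~\ref{fact:entropy basic} applies, so the subtraction is legitimate. The only point needing any care is the symmetry of joint entropy under reordering, and this is immediate from the definition of entropy as a function of the probability vector alone.
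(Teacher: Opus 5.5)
Your proof is correct and matches the paper's: expand $H[(X,Z) \; | \; Y]$ by the chain rule in both orders and equate the two expressions. Your added remarks on invariance under reordering the variables and on finiteness of the entropies are valid; the paper simply takes them for granted.
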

	\begin{proof}
		By the chain rule (Fact \ref{fact:chain rule}),
		$$
		H[(X,Z) \; | \; Y] = H[X \; | \; Y] + H[Z \; | \; X,Y]
		$$
		and
		$$
		H[(X,Z) \; | \; Y] = H[Z \; | \; Y] + H[X \; | \; Y,Z].
		$$
		(For the second expansion, we reversed the order of $X,Z$.) Equating these two expressions proves the claim.
	\end{proof}
	
	\noindent
	Finally, we need the following corollary of Pinsker's inequality. 
	\begin{lemma}\label{lem:Pinsker}
		Let $t \geq 1$ be an integer and let $\delta > 0$.
		Let $Z,W$ be random variables on the same probability space, and suppose that $Z$ takes values in $[t]$, and that
		$$
		H[Z \; | \; W] \geq \log t - \delta.
		$$
		Then 
		$$
		d_{\mathrm{TV}}\big( (Z,W), \mathrm{Unif}(t) \times W \big) \leq \sqrt{\delta},
		$$
		where $\mathrm{Unif}(t) \times W$ denotes the distribution obtained by sampling a uniform element of $[t]$ and independently sampling $W$. 
	\end{lemma}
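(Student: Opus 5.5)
We deduce the lemma from Pinsker's inequality by identifying the entropy deficit with a Kullback--Leibler divergence. Let $U$ be uniform on $[t]$, independent of $W$, and write $\mu$ for the joint law of $(Z,W)$ and $\nu$ for the law of $(U,W)$, i.e.\ $\mathrm{Unif}(t)\times W$. For each value $w$ with $\mathbb{P}[W=w]>0$, let $p_w(\cdot)$ be the conditional law of $Z$ given $W=w$. Then $\mu(z,w)=\mathbb{P}[W=w]\,p_w(z)$ and $\nu(z,w)=\mathbb{P}[W=w]/t$. Measuring divergence in bits, a direct computation gives
$$
D(\mu\,\|\,\nu)=\sum_w \mathbb{P}[W=w]\sum_{z} p_w(z)\log\big(t\,p_w(z)\big)=\sum_w \mathbb{P}[W=w]\big(\log t - H[Z\mid W=w]\big)=\log t - H[Z\mid W].
$$
By hypothesis this is at most $\delta$. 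The divergence is finite because $\nu$ has full support on $[t]\times\mathrm{supp}(W)$, so absolute continuity is automatic.

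Next we apply Pinsker's inequality, $d_{\mathrm{TV}}(\mu,\nu)\le\sqrt{\tfrac12 D_{\ln}(\mu\|\nu)}$, where $D_{\ln}$ is the divergence in nats. The only bookkeeping needed is the change of base: $D_{\ln}=\ln 2\cdot D$. This gives
$$
d_{\mathrm{TV}}(\mu,\nu)\le\sqrt{\tfrac{\ln 2}{2}\,\delta}\le\sqrt{\delta}.
$$
The definition of total variation used in the paper, a maximum over events, is exactly the one in Pinsker's inequality, so no factor of $2$ is lost.

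The only potential obstacle is whether Pinsker's inequality may be quoted. If it may not, we can prove it in the standard way. First reduce to the two-point case by applying the data-processing inequality to the indicator of the maximizing set $S$. Then verify the binary inequality $p\ln\frac pq+(1-p)\ln\frac{1-p}{1-q}\ge 2(p-q)^2$ by differentiating in $q$.
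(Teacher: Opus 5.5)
Your proposal is correct and follows the paper's proof essentially verbatim: you compute the divergence (in bits) as $D(\mu\,\|\,\nu) = \log t - H[Z \mid W] \le \delta$, check absolute continuity, and apply Pinsker's inequality in the form $d_{\mathrm{TV}} \le \sqrt{\tfrac{\ln 2}{2} D} \le \sqrt{D}$. The paper does exactly this, including the base-change bookkeeping.
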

	\begin{proof}
		For the proof, we need the definition of the Kullback-Leibler (KL) divergence. In base 2, this is defined as follows: Let $\lambda,\mu$ be two distributions on the same set $[N]$, with probabilities $\lambda = (p_1,\dots,p_N)$ and 
		$\mu = (q_1,\dots,q_N)$, and suppose that if $q_i = 0$ then $p_i = 0$. 
		Let
		$$
		D(\lambda || \mu) := \sum_{i=1}^N p_i \cdot \log\left( \frac{p_i}{q_i} \right). 
		$$
		Pinsker's inequality (see e.g.~\cite[Lemma 2.5]{Tsybakov}) gives
		\begin{equation}\label{eq:Pinsker}
		d_{\mathrm{TV}}(\lambda,\mu) \leq 
        \sqrt{ \frac{\ln 2}{2} 
        D(\lambda || \mu) } \leq 
        \sqrt{ D(\lambda || \mu) } \; .
		\end{equation}
		
		Now, let $\lambda$ be the distribution corresponding to $(Z,W)$, and let $\mu$ be the distribution corresponding to $\mathrm{Unif}(t) \times W$. Note that for all possible values $(z,w)$ taken by $(Z,W)$, if 
		$\mathbb{P}_{\mu}[(z,w)] = 0$ then $\mathbb{P}[W = w] = 0$ (since 
		$\mathbb{P}_{\mu}[(z,w)] = \frac{1}{t} \cdot \mathbb{P}[W = w]$), and then also $\mathbb{P}_{\lambda}[(z,w)] = \mathbb{P}[Z = z, W = w] = 0$. This means that 
		$D(\lambda || \mu)$ is well-defined. Let us now compute 
        $D(\lambda || \mu)$. By the definition, we have
		\begin{align*}
		D(\lambda || \mu) &= \sum_{(z,w)} \mathbb{P}[Z = z, W = w] \cdot 
		\log \left( \frac{\mathbb{P}[Z = z, W = w]}{\frac{1}{t}\mathbb{P}[W=w]} \right) \\ &= 
		\log t \cdot \left( \sum_{(z,w)} \mathbb{P}[Z = z, W = w] \right) - 
		\sum_{w} \mathbb{P}[W = w] \sum_z \mathbb{P}[Z = z \; | \; W = w] \cdot \log \left( \frac{1}{\mathbb{P}[Z = z \; | \; W = w]} \right) 
		\\ &=
		\log t - H[Z \; | \; W] \leq \delta,
		\end{align*}
		where the last inequality holds by the assumption of the lemma. The result now follows from \eqref{eq:Pinsker}.
	\end{proof} 
	
	\section{Proof of Theorem \ref{thm:main}}
	
	
	The proof is a combination of the three lemmas \ref{lem:embedding}, \ref{lem:main} and \ref{lem:preprocessing}. The first, Lemma \ref{lem:embedding}, shows that a certain structure allows the embedding of any (fixed-size) ordered tree. For sets $X_1,\dots,X_s,Y$, denote by $\mathcal{S}(X_1,\dots,X_s,Y)$ the set of all tuples 
	$(x_1,\dots,x_s,y) \in X_1 \times \dots \times X_s \times Y$ such that $x_iy \in E(G)$ for every $i \in [s]$; namely, $x_1,\dots,x_s,y$ form a star with center $y$.
	
	\begin{lemma}[embedding lemma]\label{lem:embedding}
		Let $s,t \geq 1$ be integers. 
		Let $G$ be a bipartite ordered graph with parts $X,Y$. Let $X_1 < \dots < X_s$ be subsets of $X$ and $Y_1 < \dots < Y_t$ be subsets of $Y$. For each $j \in [t]$, let $\mu_j$ be a distribution on $\mathcal{S}(X_1,\dots,X_s,Y_j)$.
		For each $i \in [s]$ and $j \in [t]$, denote by $\mu_j^{(i)}$ the marginal distribution of $\mu_j$ corresponding to the $X_i$-coordinate.\footnote{Recalling that $\mu_j$ is a distribution on $\mathcal{S}(X_1,\dots,X_s,Y) \subseteq X_1 \times \dots \times X_s \times Y$.}
		Suppose that for all $j,j' \in [t]$ and $i \in [s]$, it holds that
		\begin{equation*}\label{eq:total variation embedding}
		d_{\mathrm{TV}}\left( \mu_j^{(i)},\mu_{j'}^{(i)} \right) \leq \frac{1}{t}.
		\end{equation*}
		Then $G$ contains a copy of every $s \times t$ ordered bipartite tree. 
	\end{lemma}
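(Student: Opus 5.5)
The plan is to build a random embedding of a fixed $s \times t$ ordered bipartite tree $T$ one star at a time, and to glue consecutive stars with a coupling argument. Let $T$ have row vertices $a_1<\dots<a_s$ and column vertices $b_1<\dots<b_t$. We map $a_i$ into $X_i$ and $b_j$ into $Y_j$. Since $X_1<\dots<X_s$ and $Y_1<\dots<Y_t$, any such map is automatically injective and order-preserving, so the only requirement is that every edge of $T$ is mapped to an edge of $G$. We may assume $T$ is connected and spans all $s+t$ vertices, as in the reduction before Theorem~\ref{thm:main}.

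\textbf{Ordering the column vertices.} Fix a BFS order $b_{j_1},\dots,b_{j_t}$ of the column vertices of $T$, starting from an arbitrary column vertex. Because $T$ is a tree, each $b_{j_k}$ with $k\ge 2$ has exactly one $T$-neighbour $a_{i_k}$ (its parent) that is already adjacent to some earlier $b_{j_{k'}}$. All its other row neighbours are adjacent to no earlier column vertex.

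\textbf{The random process.} We define random images $\varphi$ inductively as follows.
\begin{itemize}
\item For $b_{j_1}$, sample a star $(x_1,\dots,x_s,y)\sim\mu_{j_1}$. Set $\varphi(b_{j_1})=y$ and $\varphi(a_i)=x_i$ for every $T$-neighbour $a_i$ of $b_{j_1}$.
\item At step $k\ge 2$, the value $\varphi(a_{i_k})$ is already defined. Sample $(x_1,\dots,x_s,y)\sim\mu_{j_k}$ using a maximal coupling, so that $x_{i_k}=\varphi(a_{i_k})$ holds except with probability equal to the total variation distance between the law of $\varphi(a_{i_k})$ and $\mu_{j_k}^{(i_k)}$. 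Set $\varphi(b_{j_k})=y$ and $\varphi(a_i)=x_i$ for the new neighbours $a_i$ of $b_{j_k}$.
\end{itemize}
Whenever the coupling succeeds at a step, all the new edges $\varphi(a_i)\varphi(b_{j_k})$ lie in $G$, because the sampled tuple lies in $\mathcal{S}(X_1,\dots,X_s,Y_{j_k})$ and $x_{i_k}=\varphi(a_{i_k})$.

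\textbf{Key invariant.} The main point is that whenever $\varphi(a_i)$ is first defined at step $k$, its law is exactly $\mu_{j_k}^{(i)}$. This holds because that coordinate is read off a tuple whose own marginal law is exactly $\mu_{j_k}$; the coupling only correlates the tuple with the past and does not change its distribution. Consequently, at step $k$ the law of $\varphi(a_{i_k})$ is $\mu_{j'}^{(i_k)}$ for some $j'$. The hypothesis then says the coupling fails with probability at most $d_{\mathrm{TV}}(\mu_{j'}^{(i_k)},\mu_{j_k}^{(i_k)})\le 1/t$.

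\textbf{Conclusion.} There are $t-1$ gluing steps. By a union bound, all of them succeed with probability at least $1-(t-1)/t>0$. On that event $\varphi$ is a copy of $T$ in $G$, so a copy exists.

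\textbf{Main obstacle.} The step needing the most care is the invariant about marginals. One must set up the coupling so that the sampled star is exactly $\mu_{j_k}$-distributed, while its correlation with the history is arbitrary. This is achieved by first drawing $x_{i_k}$ from a maximal coupling with $\varphi(a_{i_k})$, and then drawing the remaining coordinates from $\mu_{j_k}$ conditioned on that value of $x_{i_k}$.
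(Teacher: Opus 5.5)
Your argument is correct, but it runs in the opposite direction from the paper's proof.

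\textbf{The paper's route.} The paper roots $T$ at a column vertex and argues deterministically, bottom-up. For each column vertex $b$ it defines the set $S_b$ of stars in $\mathcal{S}(X_1,\dots,X_s,Y_b)$ that extend to a transversal embedding of the subtree $T_b$. It then proves by induction that $\mathbb{P}_{\mu_b}[S_b] \geq 1 - d(b)/t$, where $d(b)$ counts the column vertices strictly below $b$. The induction step uses only two things: the event-wise inequality $|\mu_b^{(i_c)}(X'_c) - \mu_c^{(i_c)}(X'_c)| \leq 1/t$, applied to the projection $X'_c$ of the good stars of each grandchild $c$, and a union bound over the grandchildren.

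\textbf{Your route.} You embed top-down by sequential random sampling. This needs the maximal coupling lemma. It also needs the observation you correctly isolate as the crux: each sampled tuple is exactly $\mu_{j_k}$-distributed, so every $\varphi(a_i)$ has a known law $\mu_{j_{k'}}^{(i)}$ at the moment it is used for gluing.

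\textbf{Comparison.} Both proofs spend the same budget: one total variation cost of at most $1/t$ per grandparent--grandchild pair of column vertices, hence $t-1$ costs in total. In that sense they are dual versions of the same estimate. The paper's version avoids constructing couplings and tracking the laws of random variables through a process; it works only with explicitly defined sets. Yours gives a more intuitive greedy picture and an explicit randomized procedure that outputs a copy of $T$ with probability at least $1/t$.

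A minor point: the appeal to the reduction before Theorem~\ref{thm:main} is unnecessary. The lemma is already stated for trees on all $s+t$ vertices.
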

	\begin{proof}
		Let $T$ be an $s \times t$ ordered bipartite tree with parts 
		$A = \{a_1 < \dots < a_s\}$ and $B = \{b_1 < \nolinebreak \dots < \nolinebreak b_t\}$. We will show that $G$ contains a copy of $T$ in which $a_i$ is mapped to $X_i$ (for $1 \leq i \leq s$) and $b_j$ mapped to $Y_j$ (for $1 \leq j \leq t$). 
		Such a copy is called a {\em transversal embedding}. We also extend this definition to subgraphs of $T$. Namely, if $T'$ is a subgraph of $T$, then an embedding $\varphi : T' \rightarrow G$ is called a transversal embedding if for every 
		$i \in [s]$ such that $a_i \in V(T')$ it holds that $\varphi(a_i) \in X_i$, and for every $j \in [t]$ such that $b_j \in V(T')$ it holds that $\varphi(b_j) \in Y_j$.  
		
		From now on it will be convenient to not use the notation $b_j$ to refer to elements of $B$, but simply write $b$ for such elements. 
		To that end, for $b = b_j$, we write $Y_b$ in place of $Y_j$ and $\mu_b$ in place of $\mu_j$. 
		Root the tree $T$ at an arbitrary element $b^*$ of $B$. 
		We will embed $T$ into $G$ ``bottom-up". 
		Let us first introduce some notation and definitions. For each $b \in B$, let $T_b$ denote the subtree of $T$ rooted at $b$.

		\begin{definition}\label{def:embedding}
			For each $b \in B$, let $S_b$ denote the set of all 
			$(x_1,\dots,x_s,y) \in \mathcal{S}(X_1,\dots,X_s,Y_b)$ such that
			there exists a transversal embedding $\varphi$ of $T_b$ into $G$ satisfying that $\varphi(b) = y$ and that $\varphi(a_i) = x_i$ for every $i \in [s]$ such that $a_i$ is a child of $b$ in $T_{b}$. 
		\end{definition}
		

        \noindent
		For each $b \in B$, let $d(b)$ denote the number of vertices $b' \in B$ contained in $T_b$, excluding $b$ itself. Thus, $d(b^*) = |B|-1 = t-1$, and 
		$d(b) = 0$ if all descendants of $b$ belong to $A$. 
		The main claim is as follows.
		\begin{claim}\label{claim:embedding main}
			For every $b \in B$, it holds that
			\begin{equation}\label{eq:embedding main probability bound}
			\mathbb{P}_{\mu_b}[S_b] \geq 1 - \frac{d(b)}{t}.
			\end{equation}
		\end{claim}
		Note that this claim completes the proof. Indeed, applying the claim to the root vertex $b^*$, we get that 
		$\mathbb{P}_{\mu_{b^*}}[S_{b^*}] \geq 1 - \frac{t-1}{t} > 0$. Hence, 
		$S_{b^*} \neq \emptyset$. But by the definition of the set $S_{b^*}$, this in particular means that there is an embedding of $T_{b^*} = T$ into $G$, as required. 
		
		\begin{proof}[Proof of Claim \ref{claim:embedding main}]
			The proof is a bottom-up induction. 
			The induction base is the case that all descendants of $b$ belong to $A$. 
			Since $T$ is bipartite with parts $A,B$, this means that $T_b$ consists only of $b$ and its children.
			It then follows that
			$S_{b} = \mathcal{S}(X_1,\dots,X_s,Y_b)$; indeed, for each star 
			$(x_1,\dots,x_s,y) \in \mathcal{S}(X_1,\dots,X_s,Y_b)$,
			mapping $b$ to $y$ and $a_i$ to $x_i$ for every $i \in [s]$ for which $a_i \in T_b$ gives a transversal embedding $\varphi$ of $T_b$ into $G$ as in Definition \ref{def:embedding}. 
			Since $\mu_b$ is a distribution on $\mathcal{S}(X_1,\dots,X_s,Y_b) = S_b$, we get that 
			$\mu_b[S_b] = 1$. This proves \eqref{eq:embedding main probability bound}, because $d(b) = 0$. 
			
			For the induction step, suppose that $b$ has some descendants belonging to $B$. Since $T$ is bipartite, this means that $b$ has grandchildren in the rooted tree (all of which belong to $B$). Let 
			$C \subseteq B$ be the set of grandchildren of $b$. For each $c \in C$, let
			$i_c \in [s]$ be such that $a_{i_c}$ is the parent of $c$ (hence $a_{i_c}$ is a child of $b$). (Note that the indices $i_c, c \in C$ need not be distinct.) 
			
			For each $c \in C$, define $X'_c$ to be the set of all 
			$x'_{i_c} \in X_{i_c}$ such that there exists 
			$(x_1,\dots,x_s,y) \in S_c$ with $x_{i_c} = x'_{i_c}$. 
			For a star $(x_1,\dots,x_s,y) \in \mathcal{S}(X_1,\dots,X_s,Y_c)$, the event that $x_{i_c} \in X'_c$ contains the event that 
			$(x_1,\dots,x_s,y) \in S_c$ (by the definition of the set $X'_c$). Hence,
			\begin{equation}\label{eq:embedding aux 1}
			\mathbb{P}_{\mu_c}[x_{i_c} \in X'_c] \geq 
			\mathbb{P}_{\mu_c}[S_c] \geq 1 - \frac{d(c)}{t},
			\end{equation}
			where the second inequality is the induction hypothesis. 
			Recall that $\mu_{c}^{(i_c)}$ denotes the marginal of $\mu_c$ corresponding to the coordinate $x_{i_c}$. Now \eqref{eq:embedding aux 1} exactly means that 
			\begin{equation}\label{eq:embedding aux 2}
			\mathbb{P}_{\mu_c^{(i_c)}}[X'_c] \geq 1 - \frac{d(c)}{t}.
			\end{equation}
			Next, by the assumption of the lemma, we have 
			$$
			d_{\mathrm{TV}}\left( \mu_{b}^{(i_c)},\mu_c^{(i_c)} \right) \leq \frac{1}{t}.
			$$
			By the definition of the total variation distance, and using \eqref{eq:embedding aux 2}, it follows that 
			$$
			\mathbb{P}_{\mu_{b}^{(i_c)}}[X'_c] \geq 1 - \frac{d(c)+1}{t}
			$$
			for every $c \in C$. 
			Now, $\mu_b^{(i_c)}$ is the marginal of $\mu_b$ corresponding to the coordinate $x_{i_c}$. Hence, the above exactly means that for the distribution $\mu_b$ on $\mathcal{S}(X_1,\dots,X_s,Y_b)$, it holds that
			$$
			\mathbb{P}_{\mu_b}[x_{i_c} \in X'_c] \geq 1 - \frac{d(c)+1}{t}
			$$
			Next, by the union bound,
			\begin{equation}\label{eq:embedding aux 3}
			\mathbb{P}_{\mu_b}\left[ x_{i_c} \in X'_c \text{ for all } c \in C \right] \geq 1 - 
			\frac{1}{t} \cdot 
			\sum_{c \in C} \left( d(c) + 1 \right) = 1 - \frac{d(b)}{t},
			\end{equation}
			where the last equality easily follows from the definition of the function $d$ (using that $C$ is precisely the set of grandchildren of $b$).
			
			Finally, observe that for a star $(x'_1,\dots,x'_s,y') \in \mathcal{S}(X_1,\dots,X_s,Y_b)$, if $x'_{i_c} \in X'_c$ for all $c \in C$ then $(x'_1,\dots,x'_s,y') \in S_b$. Indeed, for each $c \in C$, the definition of the set $X'_c$ gives a star 
			$(x_1,\dots,x_s,y_c) \in S_c$ with $x_{i_c} = x'_{i_c}$. By the definition of $S_c$, this means that there is a transversal embedding $\varphi_c$ of $T_c$ into $G$ with 
			$\varphi_c(c) = y_c$. Also, $y_c$ is adjacent (in $G$) to 
			$x_{i_c} = x'_{i_c}$, because $(x_1,\dots,x_s,y_c)$ is a star. Now, let $\varphi$ be the union of the embeddings $(\varphi_c : c \in C)$, and extend $\varphi$ to the whole of $T_b$ by setting $\varphi(b) = y'$ and 
			$\varphi(a_i) = x'_i$ for every $i \in [s]$ such that $a_i$ is a child of $b$. Then $\varphi$ is a transversal embedding of $T_b$ into $G$, witnessing that $(x'_1,\dots,x'_s,y') \in S_b$. Finally, we \nolinebreak see \nolinebreak that
			$$
			\mathbb{P}_{\mu_b}[S_b] \geq 
			\mathbb{P}_{\mu_b}\left[ x_{i_c} \in X'_c \text{ for all } c \in C \right]
			\geq 1 - \frac{d(b)}{t}, 
			$$ 
			where the last inequality is by \eqref{eq:embedding aux 3}. This completes the proof of the claim.
		\end{proof}
		\noindent
		Claim \ref{claim:embedding main} completes the proof of the lemma.
	\end{proof}
	
	The next lemma is the second main component in the proof of Theorem \ref{thm:main}. It supplies the structure required for Lemma \ref{lem:embedding}.
	
	\begin{lemma}\label{lem:main}
		For all integers $s,t \geq 1$ and all $\varepsilon > 0$, there exists an integer $$
		q = q(s,t,\varepsilon) \leq 2^{O_{s,t}(1/\varepsilon)},
		$$ 
		such that the following holds.
		Let $G$ be an ordered bipartite graph with parts $U$ and $V$, where 
		$p := |U| \geq q$. Let $V_1 < \dots < V_q$ be non-empty subsets of $V$, and suppose that for every $u \in U$ and $j \in [q]$ it holds that
		$d_{V_j}(u) \geq p^{-1+\varepsilon} \cdot |V_j|$. 
		Then there exist pairwise-disjoint subsets $X_1 < \dots < X_s$ of $U$ and pairwise-disjoint subsets $Y_1 < \dots < Y_t$ of $V$, and, for each $j \in [t]$ there exists a distribution $\mu_j$ on the set of stars $\mathcal{S}(X_1,\dots,X_s,Y_j)$, such that the following holds: For $i \in [s]$ and $j \in [t]$, let $\mu_j^{(i)}$ denote the marginal distribution of $\mu_j$ on the $X_i$-coordinate.
		Then 
		$$
		d_{\mathrm{TV}}\left( \mu_j^{(i)},\mu_{j'}^{(i)} \right) \leq \frac{1}{t}
		$$
		for all $i \in [s]$ and $j,j' \in [t]$. 
	\end{lemma}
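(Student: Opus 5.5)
The plan is to build the distributions $\mu_j$ from a uniform random choice of block and to control their $X_i$-marginals with Lemma \ref{lem:Pinsker}. We fix a parameter $\delta := 1/t^2$, so that $\sqrt{\delta} \leq 1/t$ and any two marginals are within $2/t$; replacing $t$ by $2t$ in what follows gives the bound $1/t$. The blocks $V_1 < \dots < V_q$ are organised as a complete $t$-ary tree of depth $L$, so $q = t^L$, and we will take $L = O_{s,t}(1/\varepsilon)$. This gives $q \leq 2^{O_{s,t}(1/\varepsilon)}$ as required. The leaves of the tree are the $V_j$, and each internal node is the union of the consecutive blocks below it. A choice of $Y_1 < \dots < Y_t$ will be $t$ sibling nodes, each with all its blocks merged, or a single representative block inside each sibling; siblings are automatically ordered and disjoint. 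We split $U$ into $s$ consecutive intervals $X_1 < \dots < X_s$ of size about $p/s$; these intervals will later be refined.

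For a fixed choice of intervals and a node $Y$, define $\mu_Y$ as follows. Pick $y \in Y$ with probability proportional to $\prod_{i} d_{X_i}(y)$, then pick each $x_i$ independently and uniformly from $N(y) \cap X_i$. The degree hypothesis $d_{V_j}(u) \geq p^{-1+\varepsilon}|V_j|$ ensures that $\mu_Y$ is well defined. More importantly, averaged appropriately it gives the following: for $y$ drawn in this way, $H[x_i \mid y] \geq \log|X_i| - (1-\varepsilon)\log p - O(1)$. In other words, each leaf has conditional entropy at least a $\varepsilon$-fraction of the available $\log p$.

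Now let $J = (J_1,\dots,J_L)$ be a uniformly random path in the tree, that is, a uniform leaf, and sample a star from $\mu_{V_J}$. By the chain rule (Fact \ref{fact:chain rule}) and Fact \ref{fact:chain rule corollary},
\[
\sum_{\ell=1}^{L} I(J_\ell ; x_i \mid J_{<\ell}) \;=\; I(J ; x_i) \;\leq\; \log|X_i| - H[x_i \mid J] \;\leq\; (1-\varepsilon)\log p + O(1).
\]
Summing over $i \in [s]$, some level $\ell$, and then (by averaging) some prefix $J_{<\ell} = \mathbf{j}$, has total mutual information at most $s\log p/L$.

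This is not yet the constant $\delta$ we need, and fixing that is the crux. I would run a density-increment iteration. If every level has information at least $\delta$, the argument above is not enough. In that case we instead locate a node and a sub-interval $X_i' \subseteq X_i$ on which the $J_\ell$-conditioned marginals concentrate. Restricting to $X_i'$ and to that subtree should raise the normalised density $\log(d_{X_i'}(y)/|X_i'|)$ by roughly $\delta$ bits per level spent. The relative density $p^{-1+\varepsilon}$ can only rise to $1$, a budget of $(1-\varepsilon)\log p$ bits. The intervals shrink by at most a factor $p^{O(\delta)}$ per step, which eats at most $\varepsilon\log p/2$ bits of the budget.

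Because each step's gain and each step's loss are both proportional to $\log p$, the number of increment steps is $O(1/(\varepsilon\delta))$, independent of $p$. Each step consumes one level of the tree, so $L = O(t^2/\varepsilon)$ levels suffice. This is where $q \leq 2^{O_{s,t}(1/\varepsilon)}$ comes from.

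When the iteration stops, we have a prefix $\mathbf{j}$ such that the child index $Z := J_\ell$, given $J_{<\ell} = \mathbf{j}$, satisfies $H[Z \mid x_i] \geq \log t - \delta$ for all $i$. Here the roles in Lemma \ref{lem:Pinsker} are swapped: $Z \in [t]$ is the block index and $W = x_i$. Lemma \ref{lem:Pinsker} then gives $d_{\mathrm{TV}}((Z,x_i), \mathrm{Unif}(t) \times x_i) \leq \sqrt{\delta}$. For a uniform $Z$ this implies that each conditional law of $x_i$ given $Z = j$ is within $O(t\sqrt{\delta})$ of the average law. Choosing $\delta = 1/(Ct^4)$ instead of $1/t^2$ makes this at most $1/(2t)$, and the triangle inequality then gives the required $1/t$ bound for all pairs $j,j'$. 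We take $Y_1,\dots,Y_t$ to be the $t$ children of the node $\mathbf{j}$ and $\mu_j$ the corresponding star distributions.

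The main obstacle is the increment step. We must show that large mutual information between the child index and $x_i$ actually produces a sub-interval, and not merely a subset of $X_i$, with a density gain proportional to $\log p$ rather than a constant. I would first try splitting $X_i$ into $p^{\delta}$ consecutive pieces. We then use that $I(Z; x_i) \geq \delta$ forces some child and some piece to have log-likelihood ratio of order $\delta\log p$ after conditioning. This is a quantitative Pinsker/KL-type argument. If it fails to give intervals, the fallback is to carry arbitrary subsets through the iteration and restore the ordering $X_1 < \dots < X_s$ only at the end by a final pigeonhole step.
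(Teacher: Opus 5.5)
Your endgame (a $t$-ary tree on the block indices, the chain rule to locate a level, then Lemma~\ref{lem:Pinsker} with $Z$ the child index, $W=x_i$ and $\delta\approx t^{-4}$) is also how the paper finishes. What you feed into it, however, has a genuine gap.

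First, fixing $X_1<\dots<X_s$ as consecutive intervals and weighting centres by $\prod_i d_{X_i}(y)$ cannot work. The hypothesis controls degrees of $u\in U$ into $V_j$, not degrees of $y$ into $X_i$. Split $U$ into $s$ consecutive intervals and each $V_j$ into $s$ consecutive parts, and join the $i$-th interval completely to the $i$-th part of every $V_j$. The hypothesis holds once $p^{1-\varepsilon}\ge s$, yet no $y$ has neighbours in two of your $X_i$. So $\mu_Y$ is undefined, and passing to sub-intervals cannot repair this; your lower bound on $H[x_i\mid y]$ has no source either. The $X_i$ must be extracted from $G$.

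Second, under such a $V$-side measure $I(J;x_i)$ can genuinely be of order $\log p$. The density increment meant to absorb this is exactly the step you leave open, and in the form you need it has no reason to hold. A level carrying $\delta$ bits, or any constant number of bits, is compatible with all likelihood ratios $dP_{x_i\mid Z=j}/dP_{x_i}$ lying in $[\tfrac12,\tfrac32]$. For example, take $t=2$, with one child putting mass $3/4$ on the left half of $X_i$ and the other child putting mass $3/4$ on the right half. Restricting to a sub-interval and a child then gains only $O(1)$ bits of density, not $\Omega(\delta\log p)$. Against a budget of $(1-\varepsilon)\log p$ bits, the number of increments, and hence of tree levels, is then not bounded independently of $p$.

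The missing idea is to sample from the $U$ side, so that the block index is \emph{exactly} independent of the leaf. Take $\mathbf{u}$ uniform in $U$, $\mathbf{k}$ uniform in $[q]$ independently, and $\mathbf{v}$ uniform in $N_{V_{\mathbf{k}}}(\mathbf{u})$, which is nonempty by the hypothesis. The $\varepsilon$ is then spent on choosing the $X_i$, not on the $V$ side.

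The degree condition gives $H[\mathbf{u}\mid\mathbf{k},\mathbf{v}]\ge\varepsilon\log p$. So in a $d$-ary digit expansion of $U$ with $d=\lceil(2s)^{4/\varepsilon}\rceil$, some digit has conditional entropy at least $\frac{\varepsilon}{2}\log d$ given $(\mathbf{k},\mathbf{v})$, after fixing the more significant digits. Hence $s$ conditionally independent copies $\mathbf{u}_1,\dots,\mathbf{u}_s$ of $\mathbf{u}$ given $(\mathbf{k},\mathbf{v})$ land in prescribed digit classes $X_1<\dots<X_s$ with probability at least $\rho=(\varepsilon/4d)^s$, which does not depend on $p$.

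Each pair $(\mathbf{u}_i,\mathbf{k})$ still has independent coordinates, so conditioning on this event $\mathcal{E}$ costs at most $\log(e/\rho)$ bits: $H[\mathbf{k}\mid\mathbf{u}_i,\mathcal{E}]\ge\log q-\log(e/\rho)$. With $q=t^m$ and $m\ge s\log(e/\rho)/\delta$, the chain rule over the base-$t$ digits of $\mathbf{k}$ directly yields a prefix and a level at which the entropy deficits, summed over $i\in[s]$, are at most $\delta$. No iteration is needed.

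Your Pinsker step then produces $\mu_j$, applied to the law of $(\mathbf{u}_1,\dots,\mathbf{u}_s,\mathbf{v})$ conditioned on $\mathcal{E}$, on that prefix, and on the next digit of $\mathbf{k}$ being $j$. After conditioning this digit is only approximately uniform, but the same total-variation bound shows each value has probability at least $1/(2t)$.
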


	\noindent
	We postpone the proof of Lemma \ref{lem:main} to Section \ref{sec:main lemma}. 

	The last lemma is somewhat routine. It obtains the structure given as input to Lemma \ref{lem:main}. 
	
	\begin{lemma}\label{lem:preprocessing}
		For every $\varepsilon > 0$ and integer $q \geq 1$, there exists 
		$$
		n_0  = n_0(\varepsilon,q) \leq q^{O(q/\varepsilon^2)},
		$$ 
		such that the following holds for every $n \geq n_0$. Let $G$ be an 
		$n \times n$ ordered bipartite graph with parts $U,V$ and $e(G) \geq n^{1+2\varepsilon}$. Then there exists a subset $U' \subseteq U$ with 
		$p := |U'| \geq q$, and there exist non-empty subsets $V_1 < \dots < V_q$ of $V$, such that 
        $
		d_{V_j}(u) \geq p^{-1+\varepsilon} \cdot |V_j|
		$ 
        for every $u \in U'$ and $j \in [q]$.
	\end{lemma}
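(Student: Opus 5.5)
We have to prove Lemma \ref{lem:preprocessing}: from an $n\times n$ graph with $e(G)\ge n^{1+2\varepsilon}$, find a row set $U'$ of size $p\ge q$ and $q$ ordered column blocks $V_1<\dots<V_q$ such that every $u\in U'$ has relative degree at least $p^{-1+\varepsilon}$ into every block. The plan is a standard regularization-plus-iteration argument. It has three steps: clean up degrees, split the columns into $q$ blocks, and then pass to a subset of rows that is dense in all blocks simultaneously, restarting on a smaller graph if this last step fails.

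\emph{Step 1: degree cleanup.} Repeatedly delete rows of degree less than $\frac12 n^{2\varepsilon}$ and columns of degree less than $\frac12 n^{2\varepsilon}$. Fewer than $2n\cdot\frac12 n^{2\varepsilon}$ edges are removed this way. Hence a nonempty subgraph survives in which every row and every column has degree at least $\frac12 n^{2\varepsilon}$, and it retains at least half the edges if we halve the thresholds instead.

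\emph{Step 2: splitting the columns.} Cut the remaining column set $V$ into $q$ consecutive intervals $V_1<\dots<V_q$ of equal size $m=|V|/q$. For each row $u$, let $\rho_j(u)=d_{V_j}(u)/|V_j|$ be its relative degree into $V_j$, and call $u$ good for $j$ if $\rho_j(u)$ is not much smaller than the average edge density of the whole graph. Write $N=|U|$ and $\alpha=e/(N|V|)$ for the current density, so the goal is to find $U'$ with $p=|U'|$ and $\rho_j(u)\ge p^{-1+\varepsilon}$ for all $u\in U'$ and all $j$.

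\emph{Step 3: density increment.} This is the iterative part and the main obstacle. Either many rows (at least $N^{1-\varepsilon/2}$, say) have $\rho_j(u)\ge \alpha/(2q)$ for all $j$ simultaneously, or not. In the first case set $U'$ to be these rows. Since $\alpha\ge n^{-1+2\varepsilon}$ and $p\le n$, we get $\alpha/(2q)\ge p^{-1+\varepsilon}$ once $n^{\varepsilon}\ge 2q$, so we are done.

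In the second case, most rows are deficient in some block. Pigeonhole gives a block index $j$ and a set of at least $N/(2q)$ rows whose degree into $V_j$ is small. Those rows then have degree concentrated in $V\setminus V_j$. We restrict to these rows together with the other columns, or to a single block where the density is larger. In either restriction the density $\alpha$ increases by a factor of at least $(1+c/q)$, or the number of rows shrinks only by a factor of $q$ while the columns shrink by a factor of $q$.

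The delicate point is bookkeeping the exponent. After $k$ iterations the dimensions are at least $n/q^{O(k)}$, while the edge-density exponent relative to the current size stays at least $\varepsilon$. Since density is at most $1$, the number of increments is at most $O(q\log n/\ldots)$, and this is too many. So I will instead measure progress via the exponent $\log(\text{density})/\log(\text{size})$, which must increase by about $\varepsilon^2/q$ per failed round. That bounds the number of rounds by $O(q/\varepsilon^2)$, matching $n_0\le q^{O(q/\varepsilon^2)}$, because each round loses a factor of $q^{O(1)}$ in size.

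I expect the main difficulty to be choosing the thresholds in the dichotomy so that a failure genuinely raises the exponent by $\Omega(\varepsilon^2/q)$. My first attempt is to call a row deficient in $V_j$ if $\rho_j(u)<N^{-\varepsilon/2}\alpha$. In a failure, the deficient rows collectively lose a $N^{-\varepsilon/2}$-fraction of their mass in that block. This should give a polynomial-in-$N$ density gain after passing to a half of the blocks, or equivalently after re-splitting.
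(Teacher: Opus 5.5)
Your Step 3 carries the whole content of the lemma, and as outlined it does not work.

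\emph{The failure case gives no density increment.} The at least $N/(2q)$ rows deficient in $V_j$ may have small total degree, so $R\times(V\setminus V_j)$ need not be denser than the current graph at all. Step 1 only gives the absolute minimum degree $\tfrac12 n^{2\varepsilon}$, and only once, before any restriction. It does not give a minimum degree comparable to the current average. Even a relative minimum-degree bound would give density $\Omega(\alpha)$, not $(1+c/q)\alpha$.

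\emph{Even a $(1+c/q)$ gain would not pay for the restriction.} You must ultimately beat $p^{-1+\varepsilon}$, with $p$ at most the current number of rows. Shrinking the rows from $N$ to $N/(2q)$ raises this target by a factor of about $(2q)^{1-\varepsilon}$.

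\emph{The exponent bookkeeping cannot close.} It needs a density gain by a factor $N^{\Omega(\varepsilon^2/q)}$ in every failed round. But every operation you describe changes the density by a factor bounded in terms of $q$ alone: discarding one block, keeping half of them, keeping a single block, or passing to a $1/(2q)$-fraction of the rows. Such a change moves $\log(\text{density})/\log(\text{size})$ by only $O(\log q/\log N)$. So ``this should give a polynomial-in-$N$ density gain'' is exactly the missing step, and it does not follow from your dichotomy.

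\emph{A smaller error in the good case.} The bound $p\le n$ points the wrong way, since $p^{-1+\varepsilon}$ is decreasing in $p$. You need a lower bound on $p$, and an invariant relating $\alpha$ to the current $N$ rather than to $n$.

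The missing idea is an extremal choice that rules out all denser sub-rectangles at once, instead of finding them round by round. The paper takes $m$ minimal such that some $m\times m$ rectangle $U^*\times V^*$ has at least $m^{1+\varepsilon}n^{\varepsilon}$ edges. Minimality gives $e(S,T)\le(|S|+|T|)\,|T|^{\varepsilon}n^{\varepsilon}$ whenever $|T|<m$.

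It then cuts $V^*$ into $r=\lceil(16q)^{1/\varepsilon}\rceil$ intervals, many more than $q$, and calls $(u,j)$ heavy if $d_{V_j}(u)\ge m^{\varepsilon}n^{\varepsilon}/(2r)$. Applying the rectangle bound to $\sum_j e(U_j,V_j)$, which counts the at least $\tfrac12 m^{1+\varepsilon}n^{\varepsilon}$ heavy edges, gives $\sum_j|U_j|\ge 2mq$. So on average a row is heavy in at least $2q$ intervals.

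Pigeonholing over $q$-subsets of $[r]$ then gives $q$ intervals and a set $U'$ of $p\ge m/r^q$ rows heavy in all of them. The condition $n^{\varepsilon}\ge 4r^q$ ensures $m^{\varepsilon}n^{\varepsilon}/(2r)\ge p^{-1+\varepsilon}\cdot 2m/r\ge p^{-1+\varepsilon}|V_j|$. There is no iteration. Since the output blocks are $q$ of the $r$ intervals, you never need rows that are dense in every part of a fixed partition of the columns into $q$ parts.
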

	\begin{proof}
		Set
		$$
		r := \left\lceil (16q)^{1/\varepsilon} \right\rceil, 
		\qquad
		n_0 := \left\lceil  (4qr^q)^{1/\varepsilon} \right\rceil.
		$$
		It is easy to see that $n_0 \leq q^{O(q/\varepsilon^2)}$. 
		Let $G$ be an $n \times n$ ordered bipartite graph with parts $U,V$, where $n \geq n_0$, and suppose that $e(G) \geq n^{1+2\varepsilon}$.
		Let $m$ be minimal such that there exist $U^* \subseteq U$ and $V^* \subseteq V$ with $|U^*| = |V^*| = m$ and $e(U^*,V^*) \geq m^{1+\varepsilon}n^{\varepsilon}$ (this is well-defined because one can take $m=n$). 
		Clearly, $e(U^*,V^*) \leq m^2$, so we have 
		$$
		m \geq n^{\varepsilon}.
		$$
		By the minimality of $m$, for every pair of sets $S \subseteq U, T \subseteq V$ with 
		$|S|=|T| =: \ell < m$, it holds that $e(S,T) < \ell^{1+\varepsilon}n^{\varepsilon}$. 
		We now prove that a similar conclusion holds for sets $S,T$ which are not necessarily of the same size.  
		\begin{claim}\label{claim:rectangle edge bound}
			For all $S \subseteq U, T \subseteq V$ with $|T| < m$, it holds that
			$$
			e(S,T) \leq \left( |S| + |T| \right) \cdot |T|^{\varepsilon} n^{\varepsilon}.
			$$
		\end{claim}
		\begin{proof}
			For convenience, put 
			$\ell := |T| < m$. 
			Partition $S$ into sets $S_0,S_1,\dots,S_h$ such that $|S_1| = \dots = |S_h| = \ell$ and $|S_0| < \ell$ (possibly $S_0 = \emptyset$); 
			so $h \leq 
            \frac{|S|}{\ell}$. 
			For each 
			$0 \leq i \leq h$, we have that 
			$e(S_i,T) \leq \ell^{1+\varepsilon}n^{\varepsilon}$ (by the minimality of $m$). Indeed, for $1 \leq i \leq h$ this is immediate (because $|S_i| = |T| = \ell < m$), and for $i = 0$ we can take a superset of $S_0$ of size $\ell$. 
			Now, we have
			$$
			e(S,T) = \sum_{i=0}^h e(S_i,T) \leq 
			\left( \frac{|S|}{\ell}+1 \right) \cdot \ell^{1+\varepsilon} n^{\varepsilon} = 	
			\left( |S| + \ell \right) \cdot 
            \ell^{\varepsilon}n^{\varepsilon},
			$$
			as required. 
		\end{proof}
	
		\noindent
		Next, partition $V^*$ into $r$ intervals $V_1 < \dots < V_r$, each of size 
		$\lfloor \frac{m}{r} \rfloor$ or $\lceil \frac{m}{r} \rceil$. 
		For a vertex $u \in U^*$ and an index $j \in [r]$, we say that the pair 
		$(u,j)$ is {\em heavy} if 
		$$
		d_{V_j}(u) \geq \frac{m^{\varepsilon}n^{\varepsilon}}{2r},
		$$
		and {\em light} otherwise. Furthermore, for an edge $uv \in E(U^*,V^*)$, let $j \in [r]$ be the unique index with $v \in V_j$, and let us say that $uv$ is {\em heavy} (resp. {\em light}) if the pair $(u,j)$ is heavy (resp. light).
		The total number of light edges is at most
		$$
		m \cdot r \cdot \frac{m^{\varepsilon}n^{\varepsilon}}{2r} = \frac{1}{2}m^{1+\varepsilon}n^{\varepsilon}.
		$$
		Hence, there are at least $\frac{1}{2}m^{1+\varepsilon}n^{\varepsilon}$ heavy edges (since $e(U^*,V^*) \geq m^{1+\varepsilon}n^{\varepsilon}$). 
		
		For each $j \in [r]$, let $U_j$ be the set of $u \in U^*$ such that 
		the pair $(u,j)$ is heavy. Then 
		\begin{equation}\label{eq:vertex-block incidences 1}
		\sum_{j=1}^r e(U_j,V_j) \geq \frac{1}{2}m^{1+\varepsilon}n^{\varepsilon},
		\end{equation}
		since the LHS is exactly the number of heavy edges. On the other hand, by applying Claim \ref{claim:rectangle edge bound} with $S := U_j, T := V_j$, we obtain
		\begin{equation}\label{eq:vertex-block incidences 2}
		\sum_{j=1}^r e(U_j,V_j) \leq \left( \sum_{j=1}^r (|U_j| + |V_j|) \right) \cdot 
		\left( \frac{2mn}{r} \right)^{\varepsilon} = 
		\left( \sum_{j=1}^r |U_j| + m \right) \cdot 
		\left( \frac{2mn}{r} \right)^{\varepsilon},
		\end{equation}
		where the inequality uses that $|V_j| \leq \lceil \frac{m}{r} \rceil \leq \frac{2m}{r}$ for every $j \in [r]$. By combining \eqref{eq:vertex-block incidences 1} and \eqref{eq:vertex-block incidences 2}, we get
		$$
		\sum_{j=1}^r |U_j| \geq \frac{1}{4} m r^{\varepsilon} - m \geq 
		\frac{1}{8} m r^{\varepsilon} \geq 2mq,
		$$
		where the last inequality uses the choice of $r$.
		
		Next, for each $u \in U^*$, let $J_u$ be the set of all $j \in [r]$ such that 
		the pair $(u,j)$ is heavy. Then 
		\begin{equation}\label{eq:vertex-block incidences 3}
		\sum_{u \in U^*} |J_u| = \sum_{j \in [r]}|U_j| \geq 2mq.
		\end{equation} 
		
		For each $u \in U^*$, partition the set $J_u$ arbitrarily into 
		$\left \lfloor \frac{|J_u|}{q} \right \rfloor$ groups of size $q$, discarding the remaining less than $q$ elements. 
		Let $\mathcal{F}$ be the multiset of all resulting $q$-groups, obtained by doing this for every $u \in U^*$. Thus, $\mathcal{F}$ is a multiset of $q$-subsets of $[r]$. We have
		$$
		|\mathcal{F}| = \sum_{u \in U^*} \left \lfloor \frac{|J_u|}{q} \right \rfloor \geq 
		\sum_{u \in U^*} \left( \frac{|J_u|}{q} - 1 \right) \geq 2m - m = m,
		$$ 
		where the last inequality uses \eqref{eq:vertex-block incidences 3}.
		It follows that some $F \in \mathcal{F}$ has multiplicity (as an element of the multiset $\mathcal{F}$) at least
		$$
		\frac{m}{\binom{r}{q}} \geq \frac{m}{r^q}.
		$$
		This exactly means that there is a subset $U' \subseteq U^*$, 
		$|U'| \geq \frac{m}{r^q}$, such that $F \subseteq J_u$ for every $u \in U'$. 
		Put $p := |U'|$. Since $m \geq n^{\varepsilon} \geq n_0^{\varepsilon}$, the choice of $n_0$ guarantees that $p \geq q$, as required. 
		Now write $F = \{j_1 < \dots < j_q\} \subseteq [r]$. For every $u \in U'$, the pair 
		$(u,j_i)$ is heavy for every $i \in [q]$ (by the definition of the set $J_u$). By the definition of a heavy pair, this means that
		$$
		d_{V_{j_i}}(u) \geq \frac{m^{\varepsilon}n^{\varepsilon}}{2r} 
		\overset{(*)}{\geq} p^{-1+\varepsilon} \cdot \frac{2m}{r} \geq 
		p^{-1+\varepsilon} \cdot |V_{j_i}|.
		$$
		To see that the inequality $(*)$ above holds, note that it is equivalent to 
		$p^{1-\varepsilon}n^{\varepsilon} \geq 4m^{1-\varepsilon}$. Since $p = |U'| \geq \frac{m}{r^q}$, it then suffices to have 
		$n^{\varepsilon} \geq 4 r^q$, which holds by our choice of $n_0$. It is now easy to see that the assertion of the lemma holds (with the $q$ subsets $V_{i_1} < \dots < V_{i_q}$ of $V$).
	\end{proof}
	
	\noindent
	We are now ready to prove Theorem \ref{thm:main}.
	\begin{proof}[Proof of Theorem \ref{thm:main}]
		Let $s,t \geq 1$ be integers and let $\varepsilon > 0$. Let 
		$q = q(s,t,\varepsilon)$ be given by Lemma \ref{lem:main}, and let 
		$n_0 = n_0(\varepsilon,q)$ be given by Lemma \ref{lem:preprocessing}. By the guarantees of these lemmas, we have $q \leq 2^{O_{s,t}(1/\varepsilon)}$ and hence
		$$
		n_0 \leq q^{O(q/\varepsilon^2)} \leq 2^{2^{O_{s,t}(1/\varepsilon)}} \; .
		$$
		
		Let $n \geq n_0$, and let $G$ be an $n \times n$ ordered bipartite graph with parts $U,V$ and $e(G) \geq n^{1+2\varepsilon}$ (since $\varepsilon$ is arbitrary, we may replace $\varepsilon$ with $2\varepsilon$). 
		Then Lemma \ref{lem:preprocessing} gives a subset $U' \subseteq U$ with 
		$p := |U'| \geq q$, and non-empty subsets 
        $V_1 < \dots < V_q$ of $V$, such that for every $u \in U'$ and $j \in [q]$ it holds that
		$
		d_{V_j}(u) \geq p^{-1+\varepsilon} \cdot |V_j|.
		$
		This is exactly the input to Lemma \ref{lem:main}, which supplies 
		pairwise-disjoint subsets $X_1 < \dots < X_s$ of $U'$, pairwise-disjoint subsets $Y_1 < \dots < Y_t$ of $V$, and, for each $j \in [t]$, a distribution $\mu_j$ on the set of stars $\mathcal{S}(X_1,\dots,X_s,Y_j)$, such that 
		$
		d_{\mathrm{TV}}\big( \mu_j^{(i)},\mu_{j'}^{(i)} \big) \leq \frac{1}{t}
		$
		for all $i \in [s]$ and $j,j' \in [t]$, where $\mu_j^{(i)}$ is the marginal distribution of $\mu_j$ on the $X_i$-coordinate. This is in turn the input to Lemma \ref{lem:embedding}, which states that $G$ contains a copy of every 
		$s \times t$ ordered bipartite tree, as required. 
	\end{proof}

	\section{Proof of Lemma \ref{lem:main}}\label{sec:main lemma}
	In this section we prove Lemma \ref{lem:main}.
	We may assume that $s,t \geq 2$. 
		Set
		$$
		d := \Big\lceil (2s)^{4/\varepsilon} \Big\rceil, 
		\qquad
		\rho := \left( \frac{\varepsilon}{4d} \right)^s,
		\qquad
		\delta := \frac{1}{64t^4},
		\qquad 
		m := \left\lceil \max\left( \frac{s\log(e/\rho)}{\delta}, \; \log_t(d) \right) \right\rceil,
		\qquad 
		q := t^m.
		$$
		It is easy to check that $q \leq 2^{O_{s,t}(1/\varepsilon)}$.
		Also, note that $p \geq q \geq d$, where the first inequality holds by the assumptions of the lemma, and the second inequality is by the choice of $m,q$.
		
		The following random experiment plays a central role in the proof: 
		sample $\uu \in U$ and 
		$\kk \in [q]$ uniformly at random and independently, and then sample 
		$\vv \in N_{V_{\kk}}(\uu)$ uniformly (among all vertices of $N_{V_{\kk}}(\uu)$). In particular, this experiment samples a random edge $uv \in E(U,V_1 \cup \dots \cup V_q)$ with probability
		$$
		\mathbb{P}[\uu = u, \vv = v] = \frac{1}{p \cdot q \cdot d_{V_k}(u)},
		$$ where $k \in [q]$ is the unique index with $v \in V_k$.

		We will need the following entropy estimate.
		\begin{claim}\label{claim:entropy initial}
			$$
			H[\uu \; | \; \kk,\vv] \geq \varepsilon \log p.
			$$
		\end{claim}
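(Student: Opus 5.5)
We compute $H[\uu \mid \kk,\vv]$ directly by conditioning on each value $(k,v)$ and using the conditional distribution of $\uu$. The key point is that, given $\kk = k$ and $\vv = v$, the conditional distribution of $\uu$ is supported on $N_U(v)$, with weights proportional to $1/d_{V_k}(u)$. Since every $d_{V_k}(u)$ lies between $p^{-1+\varepsilon}|V_k|$ and $|V_k|$, these weights can never be very large.

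Concretely, fix $k\in[q]$ and $v\in V_k$. By the formula for $\mathbb{P}[\uu=u,\vv=v]$,
$$
\mathbb{P}[\uu = u \mid \kk=k,\vv=v] = \frac{1/d_{V_k}(u)}{\sum_{u'\in N(v)} 1/d_{V_k}(u')}.
$$
We bound the numerator by $1/d_{V_k}(u) \leq p^{1-\varepsilon}/|V_k|$, which is the hypothesis of Lemma~\ref{lem:main}. We bound the denominator's summands by $1/d_{V_k}(u') \geq 1/|V_k|$, so the denominator is at least $d_U(v)/|V_k|$. Together these give
$$
\mathbb{P}[\uu=u\mid \kk=k,\vv=v]\le \frac{p^{1-\varepsilon}}{d_U(v)} .
$$
This is only useful when $d_U(v)$ is large, so $v$ with small degree need separate treatment.

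Rather than bounding each conditional min-entropy separately, it is cleaner to use $H[\uu\mid\kk,\vv]\geq \mathbb{E}\big[\log \frac{1}{\mathbb{P}[\uu\mid \kk,\vv]}\big]$. This is in fact an equality, since entropy is the expectation of $\log(1/\text{conditional probability})$ evaluated at the sampled value. We then write
$$
\log\frac{1}{\mathbb{P}[\uu=u\mid\kk=k,\vv=v]} = \log\big(d_{V_k}(u)\,w(k,v)\big),
\qquad w(k,v):=\sum_{u'\in N(v)} \frac{1}{d_{V_k}(u')}.
$$
The quantity $w(k,v)$ has a clean meaning: $\mathbb{P}[\kk=k,\vv=v]=w(k,v)/(pq)$, so summing over $v\in V_k$ gives $\sum_{v\in V_k} w(k,v)=p$.

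It suffices to show $\mathbb{E}[\log(d_{V_\kk}(\uu)\,w(\kk,\vv))]\ge \varepsilon\log p$. We split this as
$$
\mathbb{E}\big[\log d_{V_\kk}(\uu)\big]+\mathbb{E}\big[\log w(\kk,\vv)\big].
$$
For the first term, the hypothesis gives $d_{V_\kk}(\uu)\ge p^{-1+\varepsilon}|V_\kk|$, so it is at least $\mathbb{E}[\log|V_\kk|]-(1-\varepsilon)\log p$. For the second term, note that conditional on $\kk=k$, the law of $\vv$ on $V_k$ is exactly $w(k,v)/p$. Hence
$$
\mathbb{E}\big[\log w(\kk,\vv)\mid \kk=k\big]=\log p - H[\vv\mid \kk=k]\ge \log p-\log|V_k|,
$$
using Fact~\ref{fact:entropy basic}. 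Adding the two bounds, the $\log|V_k|$ terms cancel and we obtain $\varepsilon\log p$, as required.

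The main obstacle is recognizing this cancellation, in particular realizing that the $\vv$-marginal is proportional to $w$. Once that is seen, the remaining step is routine. Its only delicate points are confirming that the entropy identity holds pointwise and checking that no vertex $v$ with $\mathbb{P}[\vv=v]=0$ contributes to the expectation.
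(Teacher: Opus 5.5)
Your proof is correct and is essentially the paper's argument written out pointwise: your term $\mathbb{E}[\log d_{V_\kk}(\uu)]$ is exactly $H[\vv \mid \uu,\kk]$, and your term $\mathbb{E}[\log w(\kk,\vv)] = \log p - H[\vv \mid \kk]$ is $H[\uu \mid \kk] - H[\vv \mid \kk]$. So your split is the identity $H[\uu \mid \kk,\vv] = H[\uu \mid \kk] + H[\vv \mid \uu,\kk] - H[\vv \mid \kk]$, which the paper obtains from Fact~\ref{fact:chain rule corollary}; you then use the same two bounds ($\log d_{V_k}(u) \geq \log|V_k| - (1-\varepsilon)\log p$ and $H[\vv \mid \kk = k] \leq \log|V_k|$) and get the same cancellation of the $\log|V_k|$ terms, while the opening detour via $p^{1-\varepsilon}/d_U(v)$ is never used and can be dropped.
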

		\begin{proof}
			By Fact \ref{fact:chain rule corollary}, we have
			\begin{equation}\label{eq:main lemma, aux 1}
			H[\uu \; | \; \kk,\vv] = H[\uu \; | \; \kk] + 
			H[\vv \; | \; \uu,\kk] - H[\vv \; | \; \kk].
			\end{equation}
			Note that
			$H[\uu \; | \; \kk] = H[\uu] = \log p$, since $\uu,\kk$ are independent and $\uu$ is uniform on $[p]$. Moreover, for each $k \in [q]$, we have 
			\begin{equation}\label{eq:main lemma, aux 3}
			H[\vv \; | \; \kk = k] \leq \log(|V_k|)
			\end{equation}
			since, conditioned on $\kk = k$, the random variable $\vv$ takes values in $V_k$. Finally, for each 
			$u \in U$ and $k \in [q]$, the random variable $\vv$ conditioned on $\uu = u$ and $\kk = k$ is uniform on the neighborhood 
			$N_{V_{k}}(u)$, and hence 
			\begin{equation}\label{eq:main lemma, aux 4}
			H[ \vv \; | \; \uu = u, \kk = k] = \log \left( d_{V_k}(u) \right) \geq 
			\log\left( |V_k| \cdot p^{-1+\varepsilon} \right) = 
			\log(|V_k|) - (1-\varepsilon)\log p,
			\end{equation}
			where the inequality is by the assumption 
			$d_{V_k}(u) \geq p^{-1+\varepsilon} \cdot |V_k|$. Now, by the definition of conditional entropy, we have
			\begin{align*}
			H[\vv \; | \; \uu,\kk] - H[\vv \; | \; \kk] &=  
			\mathbb{E}_{u \in U, k \in [q]}\Big[ H[ \vv \; | \; \uu = u, \kk = k] \Big] - 
			\mathbb{E}_{k \in [q]}\Big[ H[\vv \; | \; \kk = k] \Big] \\ &\geq 
			\mathbb{E}_{k \in [q]} \Big( \log(|V_k|) - (1-\varepsilon)\log(p) - \log(|V_k|) \Big) = -(1-\varepsilon) \log p,
			\end{align*}
			where the inequality uses \eqref{eq:main lemma, aux 3} and \eqref{eq:main lemma, aux 4}.
			Now, plugging these estimates into \eqref{eq:main lemma, aux 1}, we get
			\begin{equation}\label{eq:main lemma, aux 5}
			H[\uu \; | \; \kk,\vv] \geq 
			\log p - (1-\varepsilon)\log p = \varepsilon \log p,
			\end{equation}
			as required. 
		\end{proof}
		
		We would like to take a sequence of nested partitions of $U$, each into $d$ equal-sized intervals. More precisely, to each $u \in U$ we assign a sequence of ``digits'' $(\alpha_1(u),\dots,\alpha_{\ell}(u)) \in [d]^{\ell}$, where
		$$
		\ell := \lceil \log_d(p) \rceil \leq 
		\frac{2\log p}{\log d}.
		$$ 
		(Here the inequality uses that $p \geq d$.)
		This assignment $u \mapsto (\alpha_1(u),\dots,\alpha_{\ell}(u))$ is done in such a way that the order on $U$ corresponds to the lexicographic order on $[d]^{\ell}$, with $\alpha_1(u)$ being the most significant digit.
		
		By the chain rule (Fact \ref{fact:chain rule}), we have
		$$
		H[\uu \; | \; \kk,\vv] = 
		H[(\alpha_1(\uu),\dots,\alpha_{\ell}(\uu)) \; | \; \kk,\vv] = 
		\sum_{h=1}^{\ell} H[\alpha_h(\uu) \; | \; \alpha_1(\uu),\dots,\alpha_{h-1}(\uu),\kk,\vv].
		$$
		Since $H[\uu \; | \; \kk,\vv] \geq \varepsilon \log p$ (by Claim \ref{claim:entropy initial}), there exists 
		$1 \leq h \leq \ell$ such that 
		$$
		H[\alpha_h(\uu) \; | \; \alpha_1(\uu),\dots,\alpha_{h-1}(\uu),\kk,\vv] \geq 
		\frac{\varepsilon \log p}{\ell} \geq \frac{\varepsilon}{2}\log d. 
		$$
		By the definition of conditional entropy and averaging, this means that there is a choice of \linebreak $\alpha_1,\dots,\alpha_{h-1} \in [d]$ such that
		$$
		H[\alpha_h(\uu) \; | \; 
		\alpha_1(\uu) = \alpha_1, \dots, \alpha_{h-1}(\uu) = \alpha_{h-1},\kk,\vv] \geq 
		\frac{\varepsilon}{2}\log d. 
		$$
		Let $U^*$ be the set of all vertices $u \in U$ satisfying 
		$\alpha_1(u) = \alpha_1, \dots, \alpha_{h-1}(u) = \alpha_{h-1}$.
		Then the above can be rewritten as
		\begin{equation}\label{eq:main lemma, aux 6}
		H[\alpha_h(\uu) \; | \; \uu \in U^*,\kk,\vv] \geq 
		\frac{\varepsilon}{2}\log d. 
		\end{equation}
		
		We would like to condition on $\uu \in U^*$ from now on, omitting this from the notation. Namely, we condition the experiment defined in the beginning of the proof on $\uu \in U^*$. This conditioned experiment samples $\uu \in U^*$ and 
		$\kk \in [q]$ uniformly at random and independently, and then samples 
		$\vv \in N_{V_{\kk}(\uu)}$ uniformly. 
		For convenience, we will denote by $\lambda$ the resulting distribution on triples $(\uu,\kk,\vv)$. 
		Let us also denote $\beta(u) := \alpha_h(u)$ for $u \in U^*$; so $\beta(u) \in [d]$. 
		We can then rewrite \eqref{eq:main lemma, aux 6} as
		\begin{equation}\label{eq:main lemma, aux 7}
		H[\beta(\uu) \; | \; \kk,\vv] \geq 
		\frac{\varepsilon}{2}\log d.
		\end{equation}
		
		We now use the entropy bound \eqref{eq:main lemma, aux 7} to show that $\beta(\uu)$ is not concentrated on a small number of values.
		To this end, we make the following definition:
		\begin{definition}\label{def:eta(j,v)}
			For each possible value $(k,v)$ of $(\kk,\vv)$, let 
			$\eta = \eta(k,v)$ be the maximum of 
			\linebreak $\mathbb{P}_{\lambda}[\beta(\uu) \in D \; | \; \kk = k,\vv = v]$ over all subsets $D \subseteq [d]$ of size at most $s-1$.
		\end{definition}
		Our goal is to lower-bound the average distance of $\eta(k,v)$ to 1 (where the average is over $k,v$). This would show that $\beta(\uu)$ is typically not concentrated on just $s-1$ values.

		\begin{claim}\label{claim:digit concentration}
			$$
			\mathbb{E}[\eta] \leq 1 - \frac{\varepsilon}{4}.
			$$
		\end{claim}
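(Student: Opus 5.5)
We bound the conditional entropy of $\beta(\uu)$ given $(\kk,\vv)=(k,v)$ pointwise in terms of $\eta(k,v)$, and then average using \eqref{eq:main lemma, aux 7}. Fix a value $(k,v)$, write $Z$ for $\beta(\uu)$ under the conditional distribution, and let $D$ be a set of size at most $s-1$ attaining $\eta=\eta(k,v)$. Let $E$ be the indicator of the event $Z\in D$. By the chain rule (Fact \ref{fact:chain rule}),
$$
H[Z]\le H[E]+H[Z\mid E]\le h(\eta)+\eta\log(s-1)+(1-\eta)\log d .
$$
Here we used Fact \ref{fact:entropy basic}: on $E$ the variable $Z$ takes at most $s-1$ values, and otherwise at most $d$ values. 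Trivially $h(\eta)\le 1$, and $\log(s-1)\le\log s$. Hence
$$
H[\beta(\uu)\mid \kk=k,\vv=v]\le 1+\log s+(1-\eta(k,v))\log d .
$$

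Next we take expectation over $(k,v)$ distributed as under $\lambda$. This is exactly the distribution with respect to which $\eta$ is averaged in the claim. Combining with \eqref{eq:main lemma, aux 7} gives
$$
\frac{\varepsilon}{2}\log d\le 1+\log s+(1-\mathbb{E}[\eta])\log d ,
$$
so
$$
1-\mathbb{E}[\eta]\ge \frac{\varepsilon}{2}-\frac{1+\log s}{\log d}=\frac{\varepsilon}{2}-\frac{\log(2s)}{\log d}.
$$

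Finally we use the choice $d\ge (2s)^{4/\varepsilon}$, which gives $\log d\ge \frac{4}{\varepsilon}\log(2s)$. Then
$$
\frac{\log(2s)}{\log d}\le \frac{\varepsilon}{4},
$$
and therefore $1-\mathbb{E}[\eta]\ge \varepsilon/4$, which is the claim.

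The argument has no real obstacle. The only point needing care is the entropy decomposition through the indicator $E$. This step is what absorbs the $s-1$ concentrated values at cost only $O(\log s)$, rather than losing a factor of $\log d$. The constant in $d$ is chosen precisely so that this additive loss is at most $\frac{\varepsilon}{4}\log d$.
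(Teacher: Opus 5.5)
Your proof is correct and matches the paper's argument: the same pointwise bound $H[\beta(\uu)\mid \kk=k,\vv=v]\le 1+\log(s-1)+(1-\eta(k,v))\log d$ via the indicator of $\beta(\uu)\in D$, averaged over $(k,v)$ against the lower bound $\frac{\varepsilon}{2}\log d$. The only difference is cosmetic: you relax $\log(s-1)$ to $\log s$, so the additive loss is exactly $\log(2s)$, which lines up directly with the choice $d\ge(2s)^{4/\varepsilon}$.
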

		\begin{proof}
			We first show that for every possible value $(k,v)$ of $(\kk,\vv)$, it holds that
			\begin{equation}\label{eq:digit concentration}
			H[\beta(\uu) \; | \; \kk = k,\vv = v] \leq 1 + \log(s-1) + \big(1-\eta(k,v) \big) \log d.
			\end{equation}
			Indeed, by the definition of $\eta = \eta(k,v)$, there exists a subset 
			$D \subseteq [d]$ with $|D| \leq s-1$ such that 
			$\mathbb{P}[\beta(\uu) \in D \; | \; \kk = k,\vv = v] = \eta$. Let $W$ be the indicator random variable of $\beta(\uu) \in D$. Then
			$\mathbb{P}[W = 1] = \eta$. Conditioned on $W=1$ (and on $\kk = k,\vv = v$), the index $\beta(\uu)$ assumes one of at most $s-1$ values, and hence
			$$
			H[\beta(\uu) \; | \; \kk = k,\vv = v, W = 1] \leq \log(s-1).
			$$
			Also, 
			$$
			H[\beta(\uu) \; | \; \kk = k,\vv = v, W = 0] \leq \log d,
			$$
			since $\beta(\uu) \in [d]$ assumes at most $d$ values. Combining the above and using the definition of conditional entropy, we conclude that
			\begin{equation}\label{eq:main lemma, aux 8}
			H[\beta(\uu) \; | \; \kk = k, \vv = v, W] \leq 
			\mathbb{P}[W=1] \cdot \log(s-1) + \mathbb{P}[W=0] \cdot \log d \leq 
			\log(s-1) + (1-\eta) \log d.
			\end{equation}
			Now, using the chain rule (Fact \ref{fact:chain rule}), we get that
			\begin{align*}
			H[\beta(\uu) \; | \; \kk = k,\vv = v] &= H[(\beta(\uu),W) \; | \; \kk = k,\vv = v] 
			= 
			H[W \; | \; \kk = k,\vv = v] + H[\beta(\uu) \; | \; \kk = k, \vv = v, W] 
			\\ &\leq 
			1 + \log(s-1) + (1-\eta) \log d,
			\end{align*}
			proving \eqref{eq:digit concentration}.
			Here, the first equality holds because $\beta(\uu)$ determines $W$, and the inequality uses \eqref{eq:main lemma, aux 8} and that $W$ is an indicator random variable (and hence has entropy at most 1). This proves \eqref{eq:digit concentration}.  
			
			Now, let us compare the upper-bound given \eqref{eq:digit concentration} with the lower bound given by \eqref{eq:main lemma, aux 7}. By the definition of conditional entropy, we have
			\begin{align*}
			\frac{\varepsilon}{2}\log d \leq H[\beta(\uu) \; | \; \kk,\vv] &= 
			\mathbb{E}_{k,v}\Big[ H[\beta(\uu) \; | \; \kk = k,\vv = v] \Big] \leq 
			\mathbb{E}_{k,v} \big[ 1 + \log(s-1) + (1-\eta) \log d \big] \\ &= 
			1 + \log(s-1) + \left( 1-\mathbb{E}[\eta] \right) \cdot \log d.
			\end{align*}
			Rearranging, we get that
			$$
			1-\mathbb{E}[\eta] \geq \frac{\varepsilon}{2} - \frac{1 + \log(s-1)}{\log d} \geq \frac{\varepsilon}{4},  
			$$
			using that $d \geq (2s)^{4/\varepsilon}$. Hence,
			$\mathbb{E}[\eta] \leq 1 - \frac{\varepsilon}{4}$, as required.
		\end{proof}

		We will now sample ``independent copies" $\uu_1,\dots,\uu_s$ of $\uu$, and use Claim \ref{claim:digit concentration} to argue that with probability 
		$\Omega_{s,\varepsilon}(1)$, the indices $\beta(\uu_1),\dots,\beta(\uu_s)$ of these copies are pairwise-distinct.  
		More precisely, let $\lambda_{\kk,\vv}$ denote the marginal distribution of $\lambda$ on pairs $(\kk,\vv)$ (where $\lambda$ is the distribution on triples $(\uu,\kk,\vv)$ defined above.)
		Sample a pair $(\kk,\vv)$ according to $\lambda_{\kk,\vv}$, and then sample 
		$\uu_1,\dots,\uu_s$ independently, each according to $\lambda$ conditioned on the values of $\kk,\vv$; namely, $\mathbb{P}[\uu_i = u] = 
		\mathbb{P}_{\lambda}[\uu = u \; | \; \kk,\vv]$, for $i = 1,\dots,s$. Note that for every $1 \leq i \leq s$, the distribution of $(\uu_i,\kk,\vv)$ is exactly $\lambda$. In particular, $\uu_i$ is uniform on $U^*$, $\kk$ is uniform on $[q]$, and $\uu_i,\kk$ are independent (because all of these properties hold for $\uu,\kk$). 
	
		\begin{claim}\label{claim:distinct nested parts}
			$$
			\mathbb{P}\left[ \beta(\uu_1),\dots,\beta(\uu_s) \text{ are pairwise-distinct} \right] \geq \left( \frac{\varepsilon}{4} \right)^{s-1}.
			$$
		\end{claim}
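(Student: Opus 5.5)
We condition on the pair $(\kk,\vv)=(k,v)$ and sample $\uu_1,\dots,\uu_s$ one at a time, showing that each new index avoids the earlier ones with conditional probability at least $1-\eta(k,v)$. Fix $(k,v)$ and write $\eta=\eta(k,v)$. Given $\kk=k,\vv=v$, the variables $\uu_1,\dots,\uu_s$ are i.i.d.\ with law $\mathbb{P}_\lambda[\uu=\cdot \mid \kk=k,\vv=v]$.

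For each $2\le i\le s$, fix the values of $\uu_1,\dots,\uu_{i-1}$ and let $D=\{\beta(\uu_1),\dots,\beta(\uu_{i-1})\}\subseteq[d]$. Then $|D|\le i-1\le s-1$. By Definition \ref{def:eta(j,v)} and independence, $\beta(\uu_i)\in D$ has probability at most $\eta$, so $\beta(\uu_i)\notin D$ has probability at least $1-\eta$. Multiplying over $i=2,\dots,s$ gives
$$
\mathbb{P}\left[\beta(\uu_1),\dots,\beta(\uu_s)\text{ pairwise-distinct}\;\middle|\;\kk=k,\vv=v\right]\ge (1-\eta(k,v))^{s-1}.
$$

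Next we average over $(k,v)\sim\lambda_{\kk,\vv}$. The function $x\mapsto x^{s-1}$ is convex on $[0,1]$, so Jensen's inequality gives $\mathbb{E}[(1-\eta)^{s-1}]\ge (1-\mathbb{E}[\eta])^{s-1}$. Claim \ref{claim:digit concentration} gives $\mathbb{E}[\eta]\le 1-\varepsilon/4$, hence $1-\mathbb{E}[\eta]\ge\varepsilon/4$. Combining these yields the claimed bound $(\varepsilon/4)^{s-1}$.

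The only point needing care is that $\eta$ is defined via $\lambda$ conditioned on $(k,v)$, which is exactly the conditional law of each $\uu_i$. This holds by construction of the sampling, so the argument has no real obstacle. The step carrying the most weight is the use of Jensen to pass from the pointwise bound to the averaged bound.
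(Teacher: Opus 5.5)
Your proposal is correct and matches the paper's proof: condition on $(\kk,\vv)=(k,v)$, apply the definition of $\eta(k,v)$ to $D=\{\beta(\uu_1),\dots,\beta(\uu_{i-1})\}$ together with conditional independence to get the bound $(1-\eta(k,v))^{s-1}$, then average over $(k,v)$ using Jensen's inequality and Claim~\ref{claim:digit concentration}. The one cosmetic difference is that you bound $|D|\le s-1$ directly, whereas the paper restricts to the event that the earlier digits are distinct. This changes nothing, since the multiplication step only uses the bound on the event that the first $i-1$ digits are distinct.
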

		\begin{proof}
			Consider any value $(k,v)$ taken by $(\kk,\vv)$, and condition on 
			$\kk = k, \vv = v$. Now select $\uu_1,\dots,\uu_s$ one by one (according to $\lambda$ conditioned on $\kk = k,\vv = v$, and independently). 
            We claim that for every 
			$1 \leq i \leq s$, 
			\begin{equation}\label{eq:distinct nested parts}
			\mathbb{P}\left[ \beta(\uu_1),\dots,\beta(\uu_i) \text{ are pairwise-distinct} \; \Big| \; \kk = k, \vv = v \right] \geq 
			\big( 1 - \eta(k,v) \big)^{i-1}. 
			\end{equation}
			The proof is by induction on $i$, and the base case $i=1$ is trivial. Now let 
            $2 \leq i \leq s$. Condition on the choice of
			$\uu_1,\dots,\uu_{i-1}$ and suppose that $\beta(\uu_1),\dots,\beta(\uu_{i-1})$ are pairwise-distinct.
			Put 
			$D := \{ \beta(\uu_1),\dots,\beta(\uu_{i-1}) \} \subseteq [d]$, so that 
			$|D| = i-1 \leq s-1$. By the definition of $\eta(k,v)$, we have
			$$
			\mathbb{P}[\beta(\uu_i) \notin D \; | \; \kk = k,\vv = v] \geq 
            1-\eta(k,v).
			$$ 
			Clearly, if $\beta(\uu_i) \notin D$ then 
			$\beta(\uu_1),\dots,\beta(\uu_i)$ are pairwise-distinct.
			Combining this with the induction hypothesis (namely, that \eqref{eq:distinct nested parts} holds for $i-1$) proves 
			\eqref{eq:distinct nested parts} for $i$. 
			
			We now remove the conditioning on $\kk,\vv$, using convexity. More precisely, we have
			\begin{align*}
			\mathbb{P}\left[ \beta(\uu_1),\dots,\beta(\uu_s) \text{ are pairwise-distinct} \right] &= 
			\mathbb{E}_{k,v} \left[ \mathbb{P}\left[ \beta(\uu_1),\dots,\beta(\uu_s) \text{ are pairwise-distinct} \; \Big| \; \kk = k, \vv = v \right] \right] \\ &\geq 
			\mathbb{E}_{k,v} \left[ \big( 1 - \eta(k,v) \big)^{s-1} \right] \geq 
			\left( 1 - \mathbb{E}[\eta] \right)^{s-1} \geq 
			\left( \frac{\varepsilon}{4} \right)^{s-1},  
			\end{align*}
			where the first inequality is \eqref{eq:distinct nested parts}, the second inequality is Jensen's (for the convex function $x \mapsto x^{s-1}$), and the last inequality is by Claim \ref{claim:digit concentration}.
			This proves the claim. 
		\end{proof}
		
		The claim gives that $\beta(\uu_1),\dots,\beta(\uu_s)$ are pairwise-distinct with probability at least $\left( \frac{\varepsilon}{4} \right)^{s-1}$. By symmetry, conditioned on $\beta(\uu_1),\dots,\beta(\uu_s)$ being pairwise-distinct, every relative order among $\beta(\uu_1),\dots,\beta(\uu_s)$ is equally likely. Hence,
		$$
		\mathbb{P}\left[ \beta(\uu_1) < \dots < \beta(\uu_s) \right] \geq 
		\frac{1}{s!} \cdot \left( \frac{\varepsilon}{4} \right)^{s-1}.
		$$
		Finally, if $\beta(\uu_1) < \dots < \beta(\uu_s)$ then these elements correspond to a subset of $[d]$ of size $s$. Hence, by the pigeonhole principle, there exist $1 \leq \beta_1 < \dots < \beta_s \leq d$ such that
		\begin{equation}\label{eq:main lemma, aux 9}
		\mathbb{P}\left[ \beta(\uu_i) = \beta_i \text{ for every } i \in [s] \right] \geq \frac{1}{s! \cdot \binom{d}{s}} \cdot \left( \frac{\varepsilon}{4} \right)^{s-1} \geq 
		\left( \frac{\varepsilon}{4d} \right)^s = \rho.
		\end{equation}
        ($\rho$ was chosen in the beginning of the proof.)
		For each $i \in [s]$, put 
		$$
		X_i := \{ u \in U^* : \beta(u) = \beta_i \}.
		$$
		(These will be the sets $X_1,\dots,X_s$ from the statement of Lemma \ref{lem:main}.)
		Clearly, $X_1 < \dots < X_s$ (because $\beta_1 < \dots < \beta_s$). Also, \eqref{eq:main lemma, aux 9} can be rewritten as
		\begin{equation*}
		\mathbb{P}\Big[ \uu_i \in X_i \text{ for every } i \in [s] \Big] \geq
        \rho.
		\end{equation*}
		Let $\mathcal{E}$ be the event that $\uu_i \in X_i$ for every $i \in [s]$. By the above, 
		\begin{equation}\label{eq:X_1,...,X_s}
		\mathbb{P}[\mathcal{E}] \geq \left( \frac{\varepsilon}{4d} \right)^s = \rho.
		\end{equation}
        We will need the following entropy estimate:
		\begin{claim}\label{claim: H[j | u_i]}
			For every $i \in [s]$, 
			$$
			H\big[\kk \; | \; \uu_i, \mathcal{E} \big] \geq 
			\log q - \log(e/\rho).
			$$
		\end{claim}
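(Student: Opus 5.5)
The plan is a standard ``conditioning on a likely event costs little entropy'' argument. Before conditioning, the pair $(\uu_i,\kk)$ is uniform on $U^* \times [q]$, because $(\uu_i,\kk,\vv)$ has distribution $\lambda$. Hence $H[\kk \; | \; \uu_i] = \log q$, and the same holds for $H[\kk]$. We need to show that conditioning on $\mathcal{E}$, an event of probability at least $\rho$, can lower the conditional entropy by at most $\log(e/\rho)$.

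The natural tool is the identity obtained by conditioning on the indicator $W := \mathds{1}_{\mathcal{E}}$. Set $P := \mathbb{P}[\mathcal{E}] \geq \rho$. By the definition of conditional entropy,
$$
H[\kk \; | \; \uu_i, W] = P \cdot H[\kk \; | \; \uu_i, \mathcal{E}] + (1-P)\cdot H[\kk \; | \; \uu_i, \overline{\mathcal{E}}].
$$
By the chain rule (Fact \ref{fact:chain rule}) applied to $(W,\kk)$ given $\uu_i$, together with $H[W \; | \; \uu_i] \leq H[W] = h(P)$, we get
$$
H[\kk \; | \; \uu_i, W] \geq H[\kk \; | \; \uu_i] - H[W \; | \; \uu_i] \geq \log q - h(P).
$$
Next I bound $H[\kk \; | \; \uu_i, \overline{\mathcal{E}}] \leq \log q$, since $\kk$ takes $q$ values (Fact \ref{fact:entropy basic}). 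Combining the two displays gives
$$
P\cdot H[\kk \; | \; \uu_i,\mathcal{E}] \geq \log q - h(P) - (1-P)\log q = P\log q - h(P).
$$
Dividing by $P$ and using Fact \ref{fact:h(p) bound}, $h(P)/P \leq \log(e/P) \leq \log(e/\rho)$. This yields the claim.

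The only point needing care is to confirm that $(\uu_i,\kk)$ really is uniform and independent in the joint experiment with $\uu_1,\dots,\uu_s$. This was noted just before Claim \ref{claim:distinct nested parts}. The same paragraph also justifies that $H[\kk \; | \; \uu_i] = \log q$ holds exactly. I would double-check the measure convention for $H[\kk \; | \; \uu_i,\mathcal{E}]$: it means $H[\kk \; | \; \uu_i]$ computed under the measure conditioned on $\mathcal{E}$, which is precisely the coefficient term in the decomposition above. I also need the monotonicity of $\log(e/x)$, which is immediate. I expect no real obstacle here; the main step is simply the indicator decomposition.
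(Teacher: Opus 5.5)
Your proposal is correct and matches the paper's proof essentially step for step. The paper likewise lower-bounds $H[\kk \mid \uu_i, \mathds{1}_{\mathcal{E}}]$ by $\log q - h(\pi)$ via the chain rule (in the form of Fact~\ref{fact:chain rule corollary}), splits this quantity according to $\mathcal{E}$ versus $\overline{\mathcal{E}}$, bounds the $\overline{\mathcal{E}}$ term by $\log q$, and finishes with Fact~\ref{fact:h(p) bound} and $\pi \geq \rho$.
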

		\begin{proof}
			Put $\pi := \mathbb{P}[\mathcal{E}]$ and note that $\pi \geq \rho$ by \eqref{eq:X_1,...,X_s}. 
			By Fact \ref{fact:chain rule corollary}, we have
			$$
			H\big[\kk \; | \; \uu_i, \mathds{1}_{\mathcal{E}}\big] = 
			H\big[\kk \; | \; \uu_i\big] + 
			H\big[\mathds{1}_{\mathcal{E}} \; | \; \kk,\uu_i\big] - 
			H\big[\mathds{1}_{\mathcal{E}} \; | \; \uu_i\big] \geq 
			H\big[\kk \; | \; \uu_i\big] - 
			H\big[\mathds{1}_{\mathcal{E}} \; | \; \uu_i\big].
			$$
			Now, $\kk,\uu_i$ are independent and $\kk$ is uniform on $[q]$, and hence
			$H\big[\kk \; | \; \uu_i\big] = \log q$. Also,
			$H\big[\mathds{1}_{\mathcal{E}} \; | \; \uu_i\big] \leq H\big[\mathds{1}_{\mathcal{E}}\big] = h(\pi)$. Hence,
			\begin{equation}\label{eq: H[j | u_i]}
			H\big[\kk \; | \; \uu_i, \mathds{1}_{\mathcal{E}}\big] \geq \log q - h(\pi).
			\end{equation}
			On the other hand, by the definition of conditional entropy, we have
            $$
			H\big[\kk \; | \; \uu_i, \mathds{1}_{\mathcal{E}}\big] = 
			\pi \cdot H\big[\kk \; | \; \uu_i, \mathcal{E} \big] + 
			(1-\pi) \cdot H\big[\kk \; | \; \uu_i, \overline{\mathcal{E}}\big].
			$$
			The first term is the one we would like to lower-bound. The second term is at most $H[\kk] = \log q$. Combining this with \eqref{eq: H[j | u_i]}, we get that
			$$
			H\big[\kk \; | \; \uu_i, \mathcal{E} \big] \geq  
			\log q - \frac{h(\pi)}{\pi} \geq \log q - \log(e/\pi)
			\geq 
			\log q - \log(e/\rho).
			$$
			Here, the second inequality uses Fact \ref{fact:h(p) bound}, and the last inequality uses that $\pi \geq \rho$.
		\end{proof}
		
		
		The next step is to take nested partitions of the index-set $[q]$ (indexing the sets $V_1,\dots,V_q$), similarly as we did for the set $U$. Here we partition into $t$ equal-sized intervals each time. Recalling that $q = t^m$, this simply corresponds to expressing $k-1$ in base $t$ for each $k \in [q]$. Namely, to each $k \in [q]$ we  assign $(\gamma_1(k),\dots,\gamma_m(k)) \in [t]^m$, where $k-1 = \sum_{h=1}^m (\gamma_h(k) - 1) \cdot t^{m-h}$. 
		(Thus, the natural order on $[q]$ corresponds to the lexicographic order on $[t]^m$, with $\gamma_1(k)$ being the most significant digit.) 
		
		By the chain rule, for every $i \in [s]$ we have
		$$
		H\Big[ \kk \; \Big| \; \uu_i, \mathcal{E} \Big] =
		H\Big[ (\gamma_1(\kk),\dots,\gamma_m(\kk)) \; \Big| \; \uu_i, \mathcal{E} \Big] 
		=
		\sum_{h=1}^{m} H\Big[\gamma_h(\kk) \; \Big| \; \gamma_1(\kk),\dots,\gamma_{h-1}(\kk),\uu_i, \mathcal{E} \Big].
		$$
		On the other hand, by Claim \ref{claim: H[j | u_i]} we have
		$$
		\sum_{i=1}^s H\Big[ \kk \; \Big| \; \uu_i, \mathcal{E} \Big] \geq s \cdot \left( \log q - \log(e/\rho) \right).
		$$
		Hence, there exists $1 \leq h \leq m$ such that
		$$
		\sum_{i=1}^s H\Big[\gamma_h(\kk) \; \Big| \; \gamma_1(\kk),\dots,\gamma_{h-1}(\kk),\uu_i, \mathcal{E}\Big]
		\geq 
		\frac{s}{m} \cdot \left( \log q - \log(e/\rho) \right) = 
		s \log t - \frac{s}{m} \cdot \log(e/\rho) \geq  
		s\log t - \delta,
		$$
		where the equality uses that $q = t^m$, and the last inequality uses the choice of $m$. 
		
		By the definition of conditional entropy and averaging, there is a choice of $\gamma_1,\dots,\gamma_{h-1} \in [t]$ \nolinebreak such \nolinebreak that 
		$$
		\sum_{i=1}^s H_i := 
		\sum_{i=1}^s H\Big[\gamma_h(\kk) \; \Big| \; 
		\uu_i, \mathcal{E}, 
		\gamma_1(\kk) = \gamma_1,\dots,\gamma_{h-1}(\kk) = \gamma_{h-1}\Big] \geq s\log t - \delta.
		$$ 
		On the other hand, $H_i \leq \log t$ for every $i \in [s]$, because $\gamma_h(\kk)$ takes $t$ values. Hence,
		$H_i \geq \log t - \delta$ for every $i \in [s]$.
		
		Next, let $\mathcal{F}$ denote the event
		$$
		\mathcal{E} \cap \{ \gamma_1(\kk) = \gamma_1,\dots,\gamma_{h-1}(\kk) = \gamma_{h-1} \}.
		$$
		We can rewrite the above conclusion as 
		\begin{equation}\label{eq:main lemma, aux 10}
		H\big[ \gamma_h(\kk) \; | \; \uu_i, \mathcal{F} \big] \geq \log t - \delta.
		\end{equation}
		
		Now apply Lemma \ref{lem:Pinsker} with 
		$Z := (\gamma_h(\kk) \; | \; \mathcal{F})$ and 
		$W := (\uu_i \; | \; \mathcal{F})$ (i.e., $Z,W$ are the random variables $\gamma_h(\kk)$ and $\uu_i$, respectively, conditioned on the event $\mathcal{F}$). Note that $H[Z \; | \; W] \geq \log t - \delta$ by \eqref{eq:main lemma, aux 10}, and that $Z$ takes values in $[t]$. Hence, Lemma \ref{lem:Pinsker} gives
		\begin{equation}\label{eq:total variation distance}
		d_{\mathrm{TV}}\big( (\gamma_h(\kk),\uu_i \; | \; \mathcal{F}), \; 
		\mathrm{Unif}(t) \times (\uu_i \; | \; \mathcal{F}) \big) \leq \sqrt{\delta}.
		\end{equation}  
		By the definition of total variation distance, this in particular means that for every $j \in [t]$, setting
		$$
		w_j := \mathbb{P}\big[ \gamma_h(\kk) = j \; | \; \mathcal{F} \big],
		$$
		we have
		\begin{equation}\label{eq:main lemma, aux 11}
		\left| w_j - 
		\frac{1}{t} \right| \leq \sqrt{\delta}.
		\end{equation}
		
		Finally, for each $j \in [t]$, put 
		$$
		K_j := \{k \in [q] : \gamma_1(k) = \gamma_1,\dots,\gamma_{h-1}(k) = \gamma_{h-1}, \gamma_h(k) = j \}
		$$
		and
		$$
		Y_j := \bigcup_{k \in K_j} V_k.
		$$
		Note that $K_1 < \dots < K_t$ in the natural order on $[q]$, and so
		$Y_1 < \dots < Y_t$ (since $V_1 < \dots < V_q$ by the assumption of the lemma). 
		This produces the required sets $Y_1,\dots,Y_t$ in the statement of the lemma. 
		
		For $j \in [t]$, let $\mu_j$ be the distribution on the set of stars $\mathcal{S}(X_1,\dots,X_s,Y_j)$ given by 
		$$
		\left( \uu_1,\dots,\uu_s,\vv \; | \; \mathcal{F}, \gamma_h(\kk) = j \right).
		$$
		Recall that $\mathcal{F} \subseteq \mathcal{E}$, and that $\mathcal{E}$ is the event that $\uu_i \in X_i$ for all $i \in [s]$. 
		Also, the definition of $\kk,\vv$ gives $\vv \in V_{\kk}$, which means that if $\mathcal{F}$ holds and $\gamma_h(\kk) = j$ then $\vv \in Y_j$.
		Finally, the definition of 
		$\uu_1,\dots,\uu_s,\vv$ implies that $\uu_i \vv \in E(G)$ for all $i \in [s]$. Hence, the above is indeed a distribution on 
		$\mathcal{S}(X_1,\dots,X_s,Y_j)$. 
		Moreover, the event $\{\mathcal{F}, \gamma_h(\kk) = j\}$ has positive probability (by \eqref{eq:main lemma, aux 11}), meaning that the conditioning is well-defined. 
		
		For $i \in [s]$ and $j \in [t]$, let $\mu_j^{(i)}$ denote the marginal of $\mu_j$ on the $X_i$-coordinate. Then $\mu_j^{(i)}$ is the distribution of 
		$
		(\uu_i \; | \; \mathcal{F}, \gamma_h(\kk) = j).
		$
		Hence, for every event $S \subseteq X_i$, we have
		\begin{equation}\label{eq:mu-marginal 1}
		\mu_j^{(i)}(S) = 
		\mathbb{P}\big[ \uu_i \in S \; | \; \mathcal{F}, \gamma_h(\kk) = j \big] = 
		\frac{1}{w_j} \cdot
		\mathbb{P}\big[ \uu_i \in S, \gamma_h(\kk) = j \; | \; \mathcal{F} \big],
		\end{equation}
		where $w_j = \mathbb{P}\big[ \gamma_h(\kk) = j \; | \; \mathcal{F} \big]$ as before. 
		By \eqref{eq:total variation distance} and the definition of total variation distance, we have
		\begin{equation}\label{eq:mu-marginal 2}
		\left| \mathbb{P}\big[ \uu_i \in S, \gamma_h(\kk) = j \; | \; \mathcal{F} \big] - \frac{1}{t} \cdot 
		\mathbb{P}\big[ \uu_i \in S \; | \; \mathcal{F} \big] \right| \leq 
		\sqrt{\delta}.
		\end{equation}
		Crucially, $\frac{1}{t} \cdot 
		\mathbb{P}\big[ \uu_i \in S \; | \; \mathcal{F} \big]$ does not depend on $j$. Hence, by combining \eqref{eq:mu-marginal 1}-\eqref{eq:mu-marginal 2} and using the triangle inequality, we get that for all $i \in [s]$ and $j,j' \in [t]$,
		\begin{equation}\label{eq:mu-marginal 3}
		\left| \mu_j^{(i)}(S) \cdot w_j - \mu_{j'}^{(i)}(S) \cdot w_{j'} \right| \leq 2\sqrt{\delta},
		\end{equation}
		Finally, again by the triangle inequality, we get
		$$
		\left| \mu_j^{(i)}(S) - \mu_{j'}^{(i)}(S) \right| \leq 
		\frac{1}{w_j} \cdot \left(
		\left| \mu_j^{(i)}(S) \cdot w_j - \mu_{j'}^{(i)}(S) \cdot w_{j'} \right| + 
		|w_{j'} - w_{j}| \right) \leq \frac{4\sqrt{\delta}}{w_j} \leq \frac{1}{t}.
		$$
        Here the second inequality used \eqref{eq:mu-marginal 3} and \eqref{eq:main lemma, aux 11}; and the last inequality
		used that $w_j \geq \frac{1}{2t}$ by \eqref{eq:main lemma, aux 11}, as well as our choice of $\delta$. 
		
		Since the above holds for every event $S \subseteq X_i$, we conclude that 
		$d_{\mathrm{TV}}\left( \mu_j^{(i)},\mu_{j'}^{(i)} \right) \leq \frac{1}{t}$, completing the proof of the lemma.

\bibliographystyle{abbrv}
\bibliography{references}

@article{FurediHajnal,
  title={Davenport-{S}chinzel theory of matrices},
  author={F{\"u}redi, Zolt{\'a}n and Hajnal, P{\'e}ter},
  journal={Discrete Mathematics},
  volume={103},
  number={3},
  pages={233--251},
  year={1992},
  publisher={Elsevier}
}

@inproceedings{Tardos_survey,
  title={Extremal theory of ordered graphs},
  author={Tardos, G{\'a}bor},
  booktitle={Proceedings of the International Congress of Mathematicians: Rio de Janeiro 2018},
  pages={3253-–3262},
  year={2018},
  organization={World Scientific}
}

@article{MarcusTardos,
  title={Excluded permutation matrices and the {S}tanley--{W}ilf conjecture},
  author={Marcus, Adam and Tardos, G{\'a}bor},
  journal={Journal of Combinatorial Theory, Series A},
  volume={107},
  number={1},
  pages={153--160},
  year={2004},
  publisher={Elsevier}
}

@article{PachTardos,
  title={Forbidden paths and cycles in ordered graphs and matrices},
  author={Pach, J{\'a}nos and Tardos, G{\'a}bor},
  journal={Israel Journal of Mathematics},
  volume={155},
  number={1},
  pages={359--380},
  year={2006},
  publisher={Springer}
}

@article{PettieTardos_refutation,
  title={A Refutation of the {P}ach-{T}ardos Conjecture for 0--1 Matrices},
  author={Pettie, Seth and Tardos, G{\'a}bor},
  journal={Combinatorica},
  volume={45},
  number={6},
  pages={59},
  year={2025},
  publisher={Springer}
}

@article{KTTW,
  title={On the {T}ur{\'a}n number of ordered forests},
  author={Kor{\'a}ndi, D{\'a}niel and Tardos, G{\'a}bor and Tomon, Istv{\'a}n and Weidert, Craig},
  journal={Journal of Combinatorial Theory, Series A},
  volume={165},
  pages={32--43},
  year={2019},
  publisher={Elsevier}
}

@article{JJMM,
  title={Tight general bounds for the extremal numbers of 0--1 matrices},
  author={Janzer, Barnab{\'a}s and Janzer, Oliver and Magnan, Van and Methuku, Abhishek},
  journal={International Mathematics Research Notices},
  volume={2024},
  number={15},
  pages={11455--11463},
  year={2024},
  publisher={Oxford University Press}
}

@article{Tardos_survey_2,
  title={Extremal theory of vertex or edge ordered graphs},
  author={Tardos, G{\'a}bor},
  journal={Surveys in Combinatorics 2019},
  volume={456},
  pages={221--236},
  year={2019},
  publisher={Cambridge University Press}
}

@book{Tsybakov,
  author    = {Alexandre B. Tsybakov},
  title     = {Introduction to Nonparametric Estimation},
  series    = {Springer Series in Statistics},
  publisher = {Springer},
  address   = {New York},
  year      = {2009},
  doi       = {10.1007/b13794},
  isbn      = {978-0-387-79051-0}
}

\end{document}